\documentclass{amsart}

\usepackage{amsmath}%
\usepackage{amsfonts}%
\usepackage{amssymb}%
\usepackage{graphicx}
\usepackage{subfigure}
\usepackage{color}
\usepackage{dsfont}
\usepackage{bbm}

%\newcommand{\jp}[1]{\textcolor{magenta}{#1}}
%\newcommand{\JP}[1]{{\color{magenta}\textbf{\textsf{[JPB: #1]}}}}
%\newcommand{\patrick}[1]{\textcolor{blue}{#1}}
%\newcommand{\PATRICK}[1]{{\color{blue}\textbf{\textsf{[PATRICK: #1]}}}}

% Definitions
%------------

\newcommand{\pa}{\partial}
\def\R{{\mathbb R}}
\def\N{{\mathbb N}}
\def\Vens{{\bf \mathbb V}}
\def\rn{{{\mathbb R}^n}}
\newcommand{\tH}{{\widetilde H}}
\newcommand{\tHs}{{\widetilde H}^s(\Omega)}
\DeclareMathOperator{\supp}{supp}

% Operators
%-----------

\theoremstyle{plain}
\newtheorem{cor}{Corollary}
\newtheorem{dfn}{Definition}

\newtheorem{pro}{Proposition}
\newtheorem{rmk}{Remark}
%--------------------------------------------------------

\numberwithin{equation}{section}
\numberwithin{table}{section}
\numberwithin{figure}{section}
\numberwithin{thm}{section}
\numberwithin{cor}{section}
\numberwithin{dfn}{section}
\numberwithin{lem}{section}
\numberwithin{pro}{section}
\numberwithin{rmk}{section}

\begin{document}
\title{On the convergence in $H^1$-norm for the fractional Laplacian}
\author[J.P.~Borthagaray]{Juan Pablo~Borthagaray}
\address[J.P.~Borthagaray]{Department of Mathematics, University of Maryland, College Park, MD 20742, USA}
\email{jpb@math.umd.edu}
\thanks{JPB has been supported in part by NSF grant DMS-1411808}

\author[P.~Ciarlet~Jr.]{Patrick~Ciarlet~Jr.}
\address[P.~Ciarlet~Jr.]{POEMS, ENSTA ParisTech, CNRS, INRIA, Universit\'{e} Paris-Saclay\\ 828 Bd des Mar\'{e}chaux, 91762 Palaiseau Cedex, France}
\email{patrick.ciarlet@ensta-paristech.fr}

\date{\today}
\begin{abstract}We consider the numerical solution of the fractional Laplacian of index $s\in(1/2,1)$ in a bounded domain $\Omega$ with homogeneous boundary conditions. Its solution a priori belongs to the fractional order Sobolev space ${\widetilde H}^s(\Omega)$. For the Dirichlet problem and under suitable assumptions on the data, it can be shown that its solution is also in $H^1(\Omega)$. In this case, if one uses the standard Lagrange finite element to discretize the problem, then both the exact and the computed solution belong to $H^1(\Omega)$. A natural question is then whether one can obtain error estimates in $H^1(\Omega)$-norm, in addition to the classical ones that can be derived in the ${\widetilde H}^s(\Omega)$ energy norm. We address this issue, and in particular we derive error estimates for the Lagrange finite element solutions on both quasi-uniform and graded meshes.
\end{abstract}
\keywords{Fractional Laplacian, finite elements, graded meshes}
\maketitle

%------------------------------------------------------------------------
%------------------------------------------------------------------------
\section{Introduction}
%------------------------------------------------------------------------
%------------------------------------------------------------------------

Let $\Omega$ be a bounded Lipschitz domain in $\R^n$ satisfying the exterior ball condition. In this paper, we study the fractional Laplace equation of index $s\in(1/2,1)$ 
\begin{equation} \label{eq:dirichlet}
\left\lbrace
  \begin{array}{rl}
      (-\Delta)^s u = f & \mbox{ in }\Omega, \\
      u =  0 & \mbox{ in }\Omega^c = \R^n \setminus \Omega . \\
      \end{array}
    \right.
\end{equation}
We call $f$ the right-hand side, which is a priori in $L^\infty(\Omega)$. The operator $(-\Delta)^s$ is called the fractional Laplacian of order $s$, and it is one of the most prominent nonlocal {operators}. It is ubiquitous in the modeling  of complex physical, biological and social phenomena that span vastly different length scales \cite{MeKl00}.

There is a clear way to define the fractional Laplacian of order $s$ for functions defined over $\rn$. Indeed, it is the pseudo-differential operator with symbol $|\xi|^{2s}$; given a function $u$ in the Schwartz class, set
\begin{equation*}\label{eq:Fourier}
(-\Delta)^s u := \mathcal{F}^{-1} \left( |\xi|^{2s} \mathcal{F} u \right) ,
\end{equation*}
where $\mathcal{F}$ denotes the Fourier transform. Equivalently, the fractional Laplacian can be defined by means of the following pointwise formula (see \cite[Section 1.1]{La72} and \cite[Proposition 3.3]{DNPaVa12})
\begin{equation}\label{eq:frac_lap}
(-\Delta)^s u(x) = C(n,s) \mbox{ p.v.} \int_\rn \frac{u(x)-u(y)}{|x-y|^{n+2s}} \, dy, \quad C(n,s) = \frac{2^{2s} s \Gamma(s+\frac{n}{2})}{\pi^{n/2}\Gamma(1-s)},
\end{equation}
where \mbox{p.v.} stands for the Cauchy principal value and $C(n,s)$ is a normalization constant. Identity \eqref{eq:frac_lap} makes evident the non-local structure of the fractional Laplacian. In the theory of stochastic processes, this operator appears as the infinitesimal generator of a $2s$-stable L\'evy process \cite{Be96}. We refer the reader to \cite{Kw17} for further characterizations of the fractional Laplacian over $\rn$.

There is not a unique mode to consistently extend the definition of the fractional Laplacian over a  bounded domain $\Omega$; see \cite{BBNOS18,DuWaZh17,Letal18} for a comparison of the different definitions and related numerical methods. In this work, we consider the {\em integral} fractional Laplacian, which is defined as follows. Given $u \in C^\infty_0(\Omega)$, we first consider the zero-extension of $u$ onto $\Omega^c$ and then use definition \eqref{eq:frac_lap}. 
This definition maintains the probabilistic interpretation of the fractional Laplacian defined over $\rn$, that is, as the generator of a random walk in $\Omega$ with arbitrarily long jumps, where particles are killed upon reaching $\Omega^c$ \cite[Chapter 2]{BuVa16}. 

For this operator, we analyze direct discretizations for problem \eqref{eq:dirichlet} using linear Lagrangian finite elements. Under the assumption that $f \in [\widetilde{H}^s(\Omega)]^*$, which is clearly true as long as we consider $f \in L^\infty(\Omega)$, it follows immediately that the solution $u$ to \eqref{eq:dirichlet} belongs to $\widetilde{H}^s(\Omega)$ (cf. \S\ref{Sobolev} for a definition of these spaces). Obviously, computing finite element solutions to \eqref{eq:dirichlet} is nothing more than projecting $u$ over the discrete spaces with respect to the ${\widetilde H}^s(\Omega)$ energy norm. Therefore, it is natural to derive convergence rates for such a method in the ${\widetilde H}^s(\Omega)$-norm \cite{AcBo17,AcBoHe18,AiGl17,DEGu13}.
Additionally, convergence rates in the $L^2(\Omega)$-norm can be obtained by performing a duality argument \textit{\`{a} la} Aubin-Nitsche \cite{BoDM16}. In \cite{BoLePa17}, error estimates in the $L^2$ norm are derived for a related finite element discretization, based on a Dumford-Taylor representation formula for the weak form of the fractional Laplacian.

In the case $s \in (1/2,1)$, under additional assumptions on the right-hand side, it can be proven that $u \in H^1(\Omega)$. Since the discrete functions also belong to $H^1(\Omega)$ (in fact, the discrete spaces are contained in $\cap_{\epsilon > 0} H^{3/2-\epsilon}(\Omega)$), a natural question is whether one can obtain error estimates in $H^1(\Omega)$ norm. 
The goal of this work is to address such a question. In particular, we derive error estimates for the Lagrange finite element solutions on both quasi-uniform and graded meshes.

Let us outline the contents of the paper. In Section~\ref{settings}, we recall some useful results regarding the problem to be solved and the regularity of its solution. More precisely, the fractional Laplacian defined over $\rn$ can be extended by density to the Sobolev space $\tHs$, see \S\ref{Sobolev} for a definition of this space. Then, one can build an equivalent variational form (\S\ref{VF_Htilde_s}). Under suitable assumptions on the data, it can be shown that its solution also belongs to $H^1(\Omega)$; the regularity results are recalled in \S\ref{regularity}. To solve the problem numerically, we choose the standard Lagrange finite element to define a conforming discretization (\S\ref{conforming_approx-1} and \S\ref{conforming_approx-2}). As pointed out before, both the exact and the computed solution belong to $H^1(\Omega)$.

We address the issue of convergence in $H^1(\Omega)$ norm, first on quasi-uniform meshes (Section~\ref{quasi-uniform}), and then on graded meshes (Section~\ref{graded}).
On quasi-uniform meshes, a use of mostly classical estimates (interpolation error, inverse inequality, ...)  allows us to conclude that convergence in $H^1(\Omega)$ norm holds, with a rate in the order of $h^{s-1/2}$ (up to a $|\log h|$ factor),  where $h$ is the mesh-size. On the other hand, it is well-known that choosing graded meshes can improve the convergence rate in problems with boundary layers.
For instance, for the same type of discretizations as the ones considered in this paper, this procedure allows to recover a rate in the order of $h$ (up to a $|\log h|$ factor) in the energy norm \cite{AcBo17}. In particular, the grading must be chosen carefully in order to keep an optimal convergence rate in terms of the dimension of the discrete finite element space. Also, one has to build estimates with respect to weighted Sobolev norms. Section~\ref{graded} is devoted to this task.

In Section~\ref{num_exp} we present some numerical experiments to highlight the results, and in particular how the predicted convergence rate is recovered numerically. Finally, in Section~\ref{sec:conclusion} we comment on the results in this manuscript and discuss possible extensions of this work.

%------------------------------------------------------------------------
%------------------------------------------------------------------------
\section{Settings and preliminaries}\label{settings}
%------------------------------------------------------------------------
%------------------------------------------------------------------------

%------------------------------------------------------------------------
\subsection{Sobolev spaces} \label{Sobolev}
%------------------------------------------------------------------------

Given $s \in (0,1)$ and $\Lambda \subset \rn$ (with the possibility that $\Lambda = \rn$), we define the Sobolev space $H^s(\Lambda)$ as
\begin{equation}\label{defn-Hs}
H^s(\Lambda) = \left\{ v \in L^2 (\Lambda) \colon |v|_{H^s(\Lambda)} < \infty \right\},\quad
\mbox{where} \quad|v|_{H^s(\Lambda)} = (v,v)_{H^s(\Lambda)}^{1/2},
\end{equation}
with
\begin{equation} \label{eq:def_innerprod_Lambda}
(v, w)_{H^s(\Lambda)} = \frac{C(n,s)}2 \iint_{\Lambda \times \Lambda} \frac{(v(x)-v(y))(w(x) - w(y))}{|x-y|^{n+2s}} \, dx dy
\end{equation}
and $C(n,s)$ is defined as in \eqref{eq:frac_lap}.

Sobolev spaces of non-integer order greater than one are defined as follows. Given $k \in \mathbb{N}$, then
\[
H^{k+s} (\Lambda) = \left\{ v \in H^k(\Lambda) \colon \pa^\beta v \in H^s(\Lambda) \ \forall \beta \mbox{ s.t. } |\beta| = k \right\} ,
\]
equipped with the norm
\[
\| v \|_{H^{k+s} (\Lambda)} =  \Big( \| v \|_{H^k(\Lambda)}^2 + \sum_{|\beta| = k } | \pa^\beta v |_{H^s(\Lambda)}^2 \Big)^{1/2}.
 \]
For a Lipschitz bounded domain $\Omega \subset \rn$, we denote by $\tHs$ the space defined by
\begin{equation*} \label{eq:def_tHs}
  \tHs = \left\{ v \in H^s(\rn) \mbox{ s.t. } \supp(v) \subset \overline \Omega \right\}.
\end{equation*}

We point out that, on $\tHs$, the natural inner product is equivalent to
\begin{eqnarray}
(v, w)_{H^s(\rn)}&=& \frac{C(n,s)}2 \iint_{\rn \times \rn} \frac{(v(x)-v(y))(w(x) - w(y))}{|x-y|^{n+2s}} dx dy,\label{eq:def_innerprod} \\
 \|v\|_{\tHs} &=& (v,v)_{H^s(\rn)}^{1/2}.\nonumber
\end{eqnarray}
because of the Poincar\'e inequality
\[
\| v \|_{L^2(\Omega)} \lesssim | v |_{H^s(\rn)}, \quad \forall v \in \tHs.
\]

It is well-known that smooth functions are dense in $H^s(\Omega)$. Another way to regard ``zero-trace'' functions on $\Omega$ is to take the closure of $C^\infty_0(\Omega)$ with respect to the $H^s(\Omega)$-norm. This gives rise to the space
\[
H^s_0(\Omega) = \overline{C^\infty_0(\Omega)}^{\| \cdot \|_{H^s{(\Omega)}}}.
\]

For $s \in (0,1)$, the aforementioned Sobolev spaces on $\Omega$ are related by
\[ \begin{aligned}
& \tHs = H^s_0(\Omega) = H^s(\Omega) & \mbox{ if } s \in (0,1/2), \\
& \widetilde{H}^{1/2}(\Omega) \subsetneq H^{1/2}_0(\Omega) = H^{1/2}(\Omega) & \mbox{ if } s =1/2, \\
& \tHs = H^s_0(\Omega)  \subsetneq H^s(\Omega) & \mbox{ if } s \in (1/2,1).
\end{aligned} \]

\begin{rmk}[Interpolation spaces]
Because $\Omega$ is a Lipschitz domain, we can also characterize fractional Sobolev spaces over $\Omega$ as real interpolation spaces. Namely, 
\[
H^s(\Omega) = [L^2(\Omega), H^1(\Omega)]_s, \qquad \tHs = [L^2(\Omega), H_0^1(\Omega)]_s, 
\]
and the norms induced by this characterization are equivalent to \eqref{eq:def_innerprod_Lambda} and \eqref{eq:def_innerprod}, respectively.
\end{rmk}

\begin{rmk}[Exact interpolation scales] A subsequent question is whether the fractional Sobolev spaces $\widetilde H^s(\Omega)$ are \emph{exact} interpolation spaces in the sense of \cite{BeLo76}, that is, whether the fractional-order norms coincide with the norms inherited by interpolation. We point out that, in general, this is not the case: the set 
$ \{ H^s(\Lambda) \colon s \in \R \}$ normed by (\ref{defn-Hs})-(\ref{eq:def_innerprod_Lambda})
is not an exact interpolation scale \cite{ChHeMo15}. On the other hand, for a Lipschitz domain $\Omega$, the equivalence constants depend on the continuity modulus of certain extension operators. This result is also valid for the $\widetilde H^s(\Omega)$ spaces by duality with $H^{-s}(\Omega) = [\tHs]^*$ spaces. For further details, we refer the reader to Section 4 in \cite{ChHeMo15}, specifically to Lemma 4.2, Corollary 4.9 and Lemma 4.13 therein.
\end{rmk}

\begin{rmk}[Normalization constant] 
The normalization constant $C(n,s)$ in the definition of fractional Sobolev spaces compensates the singular behavior of the Gagliardo seminorms  as $s$ approaches $0$ and $1$. Indeed, it satisfies
\[
C(n,s) \approx s(1-s) \qquad \mbox{as } s \to 0, 1.
\]

In the limit $s \to 0$, the presence of $C(n,s)$ ensures that (see \cite[Theorem 3]{MaSh02})
\[
\lim_{s \to 0} |v|_{H^s(\rn)} = \| v \|_{L^2(\rn)}, \quad \forall v \in H^\sigma_0(\rn)  \mbox{ for some } \sigma >0.
\] 
In particular, we have the limit
\[
\lim_{s \to 0} \|v\|_{\widetilde H^s(\Omega)} = \| v \|_{L^2(\Omega)}, \quad \forall v \in \widetilde H^\sigma(\Omega) \mbox{ for some } \sigma >0.
\]

Similarly, in the limit $s\to 1$, the following estimate holds: given $v \in L^2(\Omega)$, if $\lim_{s \to 1}  |v|_{H^s(\Omega)}$ exists and it is finite, then $v \in H^1(\Omega)$ and
\begin{equation} \label{eq:lim_s_to_1} \begin{split}
\lim_{s \to 1}  |v|_{H^s(\Omega)} = | v |_{H^1(\Omega)}.
\end{split} \end{equation}
We refer the reader to \cite{BoBrMi01} for a proof. Although Corollary 2 in that work is mainly concerned with the need of a factor of the order $\sqrt{1-s}$ to correct the scaling of the Gagliardo seminorms as $s \to 1$, we point out that a direct calculation shows that identity \eqref{eq:lim_s_to_1} holds.  
\end{rmk}

Regarding problem \eqref{eq:dirichlet}, it is known that, independently of the smoothness of the right-hand side $f$, solutions exhibit reduced regularity near the boundary of the domain. More precisely, denoting by $\delta(x)$ the distance from $x\in\Omega$ to $\pa\Omega$, solutions to the fractional Dirichlet problem are of the form \cite[formulas (7.7)--(7.12)]{Gr15}
\begin{equation}\label{eq:boundary-grubb}
u(x) \approx \delta(x)^s + v(x),
\end{equation}
with $v$ smooth. Thus, a natural approach to characterize the behavior of the solution to \eqref{eq:dirichlet} near the boundary is to introduce weighted Sobolev spaces, where the weight is a power of the distance to the boundary.

For a non-negative integer $k$ and $\alpha \in \R$, we consider the norm
\begin{equation}
\label{eq:defofWnorm}
  \| v \|_{H^k_\alpha(\Omega)}^2 = \int_\Omega \left( |v(x)|^2 + \sum_{|\beta| \leq k} |\pa^\beta v(x)|^2 \right) \delta(x)^{2\alpha} dx,
\end{equation}
and define $H^k_\alpha(\Omega)$ and $\tH^k_\alpha(\Omega)$ as the closures of $C^\infty(\Omega)$ and $C_0^\infty(\Omega)$, respectively, with respect to the norm \eqref{eq:defofWnorm}.

Next, we define weighted Sobolev spaces of non-integer order and their zero-extension counterparts.

\begin{dfn}[Weighted fractional Sobolev spaces] \label{def:weighted}
Let $\ell$ be a non-integer and positive real number, and let $\alpha \in \R$. Take $k \in \N \cup\{0\}$ and $\sigma \in (0,1)$ to be the unique numbers such that $\ell = k + \sigma$. We set
\begin{equation*}
  H^\ell_\alpha (\Omega) = \left\{ v \in H^k_\alpha(\Omega) \colon  \ | \pa^\beta v |_{H^{\sigma}_\alpha (\Omega)} < \infty,  \ \forall \beta \mbox{ s.t. } |\beta| = k  \right\} ,
\end{equation*}
where 
\[
| v |^2_{H^{\sigma}_\alpha (\Omega)} = \iint_{\Omega\times\Omega} \frac{|v(x)-v(y)|^2}{|x-y|^{n+2\sigma}} \, \delta(x,y)^{2\alpha} dx \,  dy,
\]
and
\[
\delta(x,y) = \min\{ \delta(x), \delta(y) \}.
\]

We equip this space with the norm
\[
  \| v \|_{H^\ell_\alpha (\Omega)}^2 = \| v \|_{H_\alpha^k (\Omega)}^2 + \sum_{|\beta| = k } | \pa^\beta v |^2_{H^{\sigma}_\alpha (\Omega)} .
\]
Similarly, we define zero-extension weighted Sobolev spaces by
\begin{equation*}
  \tH^\ell_\alpha (\Omega) = \left\{ v \in \tH^k_\alpha(\Omega) \colon \ | \pa^\beta v |_{H^{\sigma}_\alpha (\rn)} < \infty,  \ \forall \beta \mbox{ s.t. } |\beta| = k   \right\} ,
\end{equation*}
equipped with the norm
\[
  \| v \|_{\tH^\ell_\alpha (\Omega)}^2 = \| v \|_{H_\alpha^k (\Omega)}^2 + \sum_{|\beta| = k } | \pa^\beta v |^2_{H^{\sigma}_\alpha (\rn)} .
\]
\end{dfn}

Throughout this paper we make use of the $H^\ell_\alpha(\omega)$ and $\tH^\ell_\alpha(\omega)$ norms and seminorms, where $\omega$ is a Lipschitz subdomain of $\Omega$. We point out that, in such a case, the weight $\delta$ still refers to the distance to $\pa\Omega$.

%------------------------------------------------------------------------
\subsection{Weak formulation} \label{VF_Htilde_s}
%------------------------------------------------------------------------

We denote the duality pairing between $\tHs$ and its dual $H^{-s}(\Omega)$ by $\langle \cdot , \cdot \rangle$. The fractional Laplacian of index $s$ is an operator of order $2s$; therefore, $(-\Delta)^s v \in H^{-s}(\Omega)$ whenever $v \in \tHs$. The following integration by parts formula is a direct consequence of definitions \eqref{eq:frac_lap} and \eqref{eq:def_innerprod},
\begin{equation*} \label{eq:parts}
\langle (-\Delta)^s v, w \rangle = (v,w)_{H^s(\rn)}, \quad \forall v, w \in \tHs.
\end{equation*}

With the notation for fractional Sobolev norms introduced in \S\ref{Sobolev}, the variational form of problem \eqref{eq:dirichlet} reads:
\begin{equation} \label{eq:variational}
\mbox{find } u \in \tHs \mbox{ such that }  (u,v)_{H^s(\rn)} = \langle f, v \rangle \quad \forall v \in \tHs.
\end{equation}
We call $\|\cdot\|_{\tHs}$ the energy norm.

%------------------------------------------------------------------------
\subsection{Regularity of solutions}\label{regularity}
%------------------------------------------------------------------------

From this point on, we focus on the case $s \in (\frac12,1)$. 
In particular, $s$ has a fixed value from now on.
By definition, the solution $u$ to \eqref{eq:variational} belongs to $\widetilde{H}^s(\Omega)$. Furthermore, under the mild assumption of almost everywhere boundedness of the right-hand side, solutions belong to $u\in H^1_0(\Omega)$, with continuous dependence on the data.

\begin{pro}[$H^1$-estimate, see {\cite[Lemma 3.10]{AcBo17}}]
If $s \in (\frac12 , 1)$ and $f\in L^\infty(\Omega)$, then the solution $u$ of \eqref{eq:variational} belongs to $H^1_0({\Omega})$ and it satisfies
\[
|u|_{H^1(\Omega)} \lesssim \frac{ \| f\|_{L^\infty(\Omega)}}{2s-1}, 
\]
where the hidden constant depends on $\Omega$, but is uniformly bounded on $s \in (\frac12,1)$.
\end{pro}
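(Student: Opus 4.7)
The plan is to combine the sharp boundary regularity of $u$ near $\pa\Omega$ with an interior gradient estimate, then integrate and carefully track the dependence on $s$.

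First, I would invoke the known fine regularity for the fractional Dirichlet problem on domains satisfying the exterior ball condition (Ros-Oton--Serra, Grubb): for $f\in L^\infty(\Omega)$, the quotient $u/\delta^s$ extends to a Hölder continuous function on $\overline\Omega$, which quantifies the ansatz $u(x)\approx \delta(x)^s+v(x)$ recalled in \eqref{eq:boundary-grubb}. In particular,
\[
\|u\|_{L^\infty(\Omega)} \;+\; \bigl\|u/\delta^s\bigr\|_{L^\infty(\Omega)} \;\lesssim\; \|f\|_{L^\infty(\Omega)},
\]
with a constant independent of $s$ in a neighborhood of $1$. Consequently, for any $x_0\in\Omega$ close to the boundary with $r:=\tfrac12\delta(x_0)$, the ball $B_{r}(x_0)\subset\Omega$ and $\|u\|_{L^\infty(B_{2r}(x_0))}\lesssim r^s\|f\|_{L^\infty(\Omega)}$.

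Next, I would apply an interior gradient estimate for $(-\Delta)^s$ in $B_{r}(x_0)$, using that $u$ solves $(-\Delta)^s u=f$ in $\Omega$. Standard interior regularity for the fractional Laplacian (Silvestre, Caffarelli--Silvestre) together with a scaling argument gives
\[
\|\nabla u\|_{L^\infty(B_{r/2}(x_0))} \;\lesssim\; r^{-1}\|u\|_{L^\infty(\R^n)} + r^{2s-1}\|f\|_{L^\infty(B_{r}(x_0))}.
\]
Plugging in $\|u\|_{L^\infty(\R^n)}\lesssim r^s\|f\|_{L^\infty(\Omega)}$ (valid because we restrict to $x_0$ near $\pa\Omega$; away from $\pa\Omega$, $\delta$ is bounded below and the estimate is trivial), and using $r\approx\delta(x_0)$ together with $2s-1\ge 0$, both terms give the uniform pointwise bound
\[
|\nabla u(x)| \;\lesssim\; \|f\|_{L^\infty(\Omega)}\,\delta(x)^{s-1} \qquad \text{for a.e. } x\in\Omega.
\]

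The final step is to integrate. Since $\Omega$ is a bounded Lipschitz domain, a standard estimate on the distance function (e.g., via a tubular decomposition of a neighborhood of $\pa\Omega$) gives
\[
\int_\Omega \delta(x)^{2s-2}\,dx \;\lesssim\; \frac{1}{2s-1},
\]
where the singular behavior comes from the critical exponent $2s-2>-1$. Combining, one obtains
\[
|u|_{H^1(\Omega)}^2 \;=\; \int_\Omega |\nabla u(x)|^2\,dx \;\lesssim\; \frac{\|f\|_{L^\infty(\Omega)}^2}{2s-1},
\]
which yields the claimed estimate (possibly with a $\sqrt{2s-1}$ instead of $2s-1$; the stated sharper form would then require squaring via a slightly more refined pointwise bound, tracking the $s$-dependence of the constant in the gradient estimate, which is where I expect the main technical obstacle). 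Once $|u|_{H^1(\Omega)}$ is finite, the zero Dirichlet data transferred from $u\equiv 0$ on $\Omega^c$ together with the trace theorem place $u$ in $H^1_0(\Omega)$, completing the proof.
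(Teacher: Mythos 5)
Your overall strategy coincides with that of the cited proof ([AcBo17, Lemma~3.10], which the paper itself does not reproduce): one establishes the pointwise bound $|\nabla u(x)|\lesssim \delta(x)^{s-1}\|f\|_{L^\infty(\Omega)}$ --- this is Lemma~2.9 of Ros-Oton--Serra, obtained precisely by combining the global $C^s(\R^n)$ estimate with rescaled interior estimates --- and then integrates $\delta^{2s-2}$ over a tubular neighborhood of $\pa\Omega$, which is where the factor $1/(2s-1)$ originates. Two points in your write-up need correction. First, the step ``$\|u\|_{L^\infty(\R^n)}\lesssim r^s\|f\|_{L^\infty(\Omega)}$'' is false as written: the \emph{global} supremum of $u$ does not shrink as $x_0$ approaches $\pa\Omega$ (only the local one on $B_{2r}(x_0)$ does, via $\|u/\delta^s\|_{L^\infty(\Omega)}$). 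Since the interior estimate for a nonlocal operator must see all of $u$, the quantity that legitimately appears in your displayed gradient bound is not $\|u\|_{L^\infty(\R^n)}$ but a weighted tail; one controls it using the global bound $\|u\|_{C^s(\R^n)}\lesssim\|f\|_{L^\infty(\Omega)}$ together with $|u(x_0)|\lesssim\delta(x_0)^s\|f\|_{L^\infty(\Omega)}$, which after rescaling give $\int_{\R^n}(1+|y|)^{-n-2s}|u(x_0+ry)|\,dy\lesssim r^s\|f\|_{L^\infty(\Omega)}$; with this substitution your scaling argument goes through. Second, your closing worry is inverted: since $2s-1\in(0,1)$ one has $1/\sqrt{2s-1}\le 1/(2s-1)$, so the bound $|u|_{H^1(\Omega)}\lesssim\|f\|_{L^\infty(\Omega)}/\sqrt{2s-1}$ produced by the integration is \emph{stronger} than the stated one and no further refinement is required (the remaining issue is only the uniformity in $s$ of the constants inherited from the regularity theory, which the cited reference tracks). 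Finally, $u\in H^1_0(\Omega)$ follows from $u\in H^1(\Omega)$ together with $u\equiv 0$ on $\Omega^c$ and the continuity of $u$ up to $\pa\Omega$, rather than from a trace theorem applied to $H^1(\Omega)$ alone; this is a minor point.
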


A natural question is how much additional smoothness can be guaranteed under further assumptions on the data.  It is the case that, if the right-hand side $f$ possess certain H\"older regularity, then further regularity of $u$ follows.

\begin{pro}[Higher-order estimate, see {\cite[Theorem 3.5 and identity (3.6)]{BBNOS18}}] \label{pro:regularity}
Let $s \in \left(\frac12, 1\right)$ be given and $f\in C^\beta(\overline \Omega)$ for some $\beta>0$. Then, it holds that
\begin{equation}\label{continuous_dependence}
u \in \bigcap_{\epsilon>0}\widetilde{H}^{s+1/2-\epsilon}(\Omega),  \qquad \mbox{with} \quad  \|u\|_{\widetilde{H}^{s+1/2-\epsilon}(\Omega)} \lesssim \frac{\|f\|_{C^\beta(\overline \Omega)}}{\epsilon}, \quad \forall\epsilon \in(0,1/2).
\end{equation}
Furthermore, for $\beta \in (0, 2 - 2s)$, let $\ell \in (s+1/2, \beta + 2s)$ and $\alpha > \ell - s -1/2$. If $f\in C^\beta(\overline\Omega)$, then $u \in \widetilde{H}^\ell_\alpha (\Omega)$ and
\begin{equation}\label{eq:weighted_regularity}
|u|_{\widetilde H^\ell_\alpha (\Omega)} \lesssim \frac{\| f \|_{C^\beta(\overline\Omega)}}{(\beta + \ell - 2s) (1/2 + \alpha + s - \ell)} .
\end{equation}
The hidden constants depend on $\Omega$ and the dimension $n$.
\end{pro}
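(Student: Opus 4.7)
The starting point is the boundary asymptotics \eqref{eq:boundary-grubb} of Grubb: for $f$ smooth enough, the solution decomposes as $u = c\,\delta^s\eta + v$, where $\eta$ is a cutoff supported in a tubular neighborhood of $\pa\Omega$ and $v$ is substantially more regular (essentially $C^{2s+\beta}$ up to the boundary whenever $f\in C^\beta$). The plan is to split all estimates into an interior part and a boundary-layer part: the interior part is controlled by standard interior elliptic regularity for $(-\Delta)^s$ (which gives $u\in C^{2s+\beta}_{\mathrm{loc}}(\Omega)$ with quantitative control by $\|f\|_{C^\beta}$), while the boundary part reduces, after Lipschitz straightening of $\pa\Omega$, to explicit computations on the model profile $x_n^s\chi(x)$.

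For the unweighted estimate \eqref{continuous_dependence}, one evaluates the Gagliardo seminorm of $x_n^s\chi$ on $\rn$. The critical index is $t=s+1/2$: the model function lies in $H^t$ for every $t<s+1/2$ but not at the endpoint, and the integral
\[
\iint \frac{(x_n^s-y_n^s)^2}{|x-y|^{n+2t}}\,\chi(x)\chi(y)\,dx\,dy
\]
blows up as $\epsilon = s+1/2-t\to 0$. The dominant contribution comes from pairs $(x,y)$ straddling the hyperplane $\{x_n=0\}$ and, after integrating the tangential directions against $\chi$ and rescaling in the normal direction, reduces to a one-dimensional integral whose value is a negative power of $\epsilon$; combined with the $\epsilon$-independent contributions from $v$ and from the interior part, this produces the bound \eqref{continuous_dependence}.

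For the weighted estimate \eqref{eq:weighted_regularity}, the analogous reduction leads to bounding $|\delta^s\eta|_{\widetilde H^\ell_\alpha(\Omega)}$. Since $|\pa^\gamma \delta^s|\lesssim \delta^{s-|\gamma|}$ in the tubular neighborhood, the integer-order contribution is controlled by $\int \delta^{2(s-k)+2\alpha}\,dx$, which is finite iff $\alpha>k-s-1/2$. The fractional-order increment, rewritten with the weight $\delta(x,y)^{2\alpha}$ and split into the near and far regimes with respect to $\delta(x,y)$, reduces after scaling to an integral of the form $\int_0^1 r^{2(s-\ell+\alpha)}\,dr$, finite iff $\alpha>\ell-s-1/2$ and of order $(1/2+\alpha+s-\ell)^{-1}$; this accounts for the second factor in the denominator of \eqref{eq:weighted_regularity}. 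The remaining factor $(\beta+\ell-2s)^{-1}$ arises from estimating the smooth remainder $v\in C^{2s+\beta}(\overline{\Omega})$ in $\widetilde H^\ell_\alpha$ via a continuous Hölder-to-weighted-Sobolev embedding whose constant degenerates at the critical index $\ell = 2s+\beta$.

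The main obstacle I foresee is organizing the Grubb decomposition quantitatively: one needs an explicit bound on both the coefficient $c$ and the remainder $v$ in terms of $\|f\|_{C^\beta(\overline\Omega)}$, and the local flattening of $\pa\Omega$ must be arranged so as not to introduce $\epsilon$-, $\alpha$- or $\ell$-dependent constants, so that the divergences appearing in the bounds on the model profile $\delta^s$ are genuinely transferred to the final estimates and the stated sharpness in both \eqref{continuous_dependence} and \eqref{eq:weighted_regularity} is preserved.
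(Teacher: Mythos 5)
A preliminary remark: the paper offers no proof of this proposition --- it is imported verbatim from \cite[Theorem 3.5 and identity (3.6)]{BBNOS18} --- so the comparison below is with the argument in that reference (which builds on Acosta--Borthagaray and on the Ros-Oton--Serra regularity theory), not with anything internal to this manuscript.

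Your sketch captures the correct heuristics: the $\delta^s$ boundary layer, the critical index $s+1/2$, and the role of the weight $\delta^{2\alpha}$ in raising the admissible differentiability to $\ell$; your model computations for $x_n^s$ are indeed the kernel of the real proof, and the two factors in the denominator of \eqref{eq:weighted_regularity} do arise from two one-dimensional radial integrals much as you describe. But there is a genuine gap, and it sits exactly where you flag it. First, the entire reduction is routed through a Grubb-type factorization $u = c\,\delta^s\eta + v$ with $v$ ``substantially more regular,'' and the quantitative control of $c$ and $v$ by $\|f\|_{C^\beta(\overline\Omega)}$ --- the step that turns the heuristic into the stated inequalities --- is deferred rather than supplied. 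Second, and more seriously, that factorization is not available under the standing hypotheses: $\Omega$ is only Lipschitz with the exterior ball condition, whereas Grubb's asymptotics \eqref{eq:boundary-grubb} require a smooth (at least $C^{1,\gamma}$) boundary, and a Lipschitz corner destroys the clean $\delta^s$ profile; the local flattening you invoke cannot repair this. The cited proof avoids the decomposition altogether: it uses the Ros-Oton--Serra estimates, valid in this generality, namely the global bound $\|u\|_{C^s(\R^n)} \lesssim \|f\|_{L^\infty(\Omega)}$ together with interior H\"older/derivative bounds that degenerate as prescribed powers of $\delta(x)$ (e.g.\ $|\nabla u(x)| \lesssim \delta(x)^{s-1}$ and $[\nabla u]_{C^{\beta+2s-1}(B_{\delta(x)/2}(x))} \lesssim \delta(x)^{1-\beta-s}\|f\|_{C^\beta(\overline\Omega)}$), and then estimates the Gagliardo double integrals directly over a Whitney-type covering, splitting each contribution according to $|x-y| \lessgtr \delta(x)/2$. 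If you replace the Grubb decomposition by these pointwise weighted H\"older estimates, your near/far splitting and scaling arguments go through and yield \eqref{continuous_dependence} and \eqref{eq:weighted_regularity} with the stated blow-up rates.
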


\begin{rmk}[Sharpness]\label{rmk_sharpness}
The first statement in the previous proposition is sharp. The boundary behavior \eqref{eq:boundary-grubb} causes, in general, solutions not to be in $H^{s+1/2}(\Omega)$. For instance, if $\Omega$ is a ball with center $x_0$ and radius $r$, and $f \equiv 1$, then
\[
u (x) = C \, ( r - |x-x_0|^2)^s_+.
\]
Additionally, interior regularity estimates for the fractional Laplacian are well understood, and indicate a lifting of order $2s$, measured either in the H\"older \cite{ROSe14} or in suitable Besov \cite{Co17} scales.
\end{rmk}

\begin{rmk}[Case of interest]
For a smooth right-hand side, a case of interest in \eqref{eq:weighted_regularity} to derive optimal approximation rates in the energy norm (see Subsection \ref{subseq:interp_estimates_graded_meshes}) is, for $\epsilon \in (0, 1/2)$,
\[
\beta = 1 - s, \quad \ell = 1 + s - 2\epsilon, \quad \alpha = 1/2 - \epsilon.
\]
This yields the estimate
\begin{equation}\label{eq:weighted_regularity_1/2}
\|u\|_{\widetilde{H}^{1+s-2\epsilon}_{1/2-\epsilon}(\Omega)}\lesssim \frac{\|f\|_{C^{1-s}(\overline \Omega)}}{\epsilon}, \quad \forall\epsilon \in (0,1/2).
\end{equation}
\end{rmk}

%------------------------------------------------------------------------
\subsection{Conforming approximations} \label{conforming_approx-1}
%------------------------------------------------------------------------
We consider {\em conforming} approximation of the fractional Laplace equation, realized with the help of globally continuous $P^1$ Lagrange finite elements on a {\em shape-regular} family of triangulations $({\mathcal T}_h)_h$ of $\Omega$ (see \cite[Definition 1.107]{ErGu04}); elements of triangulations are (closed) simplices of $\R^n$. We call $(\Vens_h)_h$ the discrete spaces, where $h$ denotes the mesh-size of a given triangulation; more precisely, we set 
\[
\Vens_h = \{ v \in C(\overline\Omega) \mbox{ s.t. } v |_T \in P^1 \ \forall T \in \mathcal{T}_h,  \ v |_{\pa\Omega} = 0 \} .
\]
Importantly, one has $\Vens_h\subset H^1_0(\Omega)$ for all $h$. We write $h_T$ for the diameter of an element $T \in \mathcal{T}_h$ (recall that $h=\max_T h_T$). In the following, given a set $\omega \subset \Omega$,  $S_\omega$ denotes the star of elements that intersect $\omega$, 
\[
S_\omega = \bigcup_{T' \colon \omega \cap T' \neq \emptyset} T' .
\]
Because elements are closed subsets of $\R^n$, $S_\omega$ is by definition a closed subset of $\R^n$. In particular, given $T \in \mathcal{T}_h$, we make use of the sets
\[
S_T = \bigcup_{T' \colon T \cap T' \neq \emptyset} T' \quad \mbox{and} \quad S_{S_T} = \bigcup_{T' \colon S_T \cap T' \neq \emptyset} T'.
\]
We set $u_h$ to be the solution of the discrete variational formulation
\begin{equation*} \label{eq:disc} 
\mbox{find $u_h \in \Vens_h$ such that } (u_h, v_h)_{H^s(\rn)} = \langle f, v_h \rangle \quad  \forall v_h \in \Vens_h.
\end{equation*}
It follows immediately that $u_h$ is the best approximation in $\Vens_h$ to the solution $u$ with respect to the energy norm:
\begin{equation}\label{eq:cea}
\| u - u_h \|_{\tHs} = \min_{v_h \in \Vens_h} \| u - v_h \|_{\tHs} .
\end{equation}

%------------------------------------------------------------------------
\subsection{Interpolation error} \label{conforming_approx-2}
%------------------------------------------------------------------------

From \eqref{eq:cea}, the only missing ingredient to deduce an a priori convergence rate (in the energy norm) for the fractional Laplace equation is an interpolation error estimate. This, combined with the regularity of solutions expressed in the first part of Proposition \ref{pro:regularity} gives the desired rate.

Let $I_h$ denote the Scott-Zhang interpolation operator \cite{ScZh90}. Local approximation estimates in integer-order norms are well-known,
\begin{equation} \label{eq:local_Scott-Zhang}
| v - I_h v |_{H^k(T)} \lesssim h_T^{\ell - k} | v |_{H^\ell(S_T)}, \quad \forall v \in H^\ell(\Omega), \  k \in \{0,1\} , \ \ell \in [k, 2].
\end{equation}
Moreover, it is a simple exercise to derive an approximation estimate in terms of the fractional weighted scale introduced in Definition \ref{def:weighted}. Indeed, it holds that
\begin{equation}\label{Scott-Zhang_weighted}
| v - I_h v |_{H^k(T)} \lesssim h_T^{\ell - k - \alpha} | v |_{H^\ell_\alpha(S_T)},  
\end{equation}
for all $v \in \widetilde H^\ell_\alpha(\Omega),$ $k \in \{0,1\},$ $\ell \in [k, 2],$ and $\alpha \in [0, \ell -k].$

Due to its non-local nature, in order to obtain a global interpolation estimate in a fractional-order norm, it is not desirable to have norms on elements on the left-hand side. However, such as developed in \cite{AcBo17}, it suffices to derive bounds over sets of the form $T \times S_T$, and use localization techniques \cite{Fa02}.

\begin{pro}[Local interpolation estimate; see \cite{Bort17,Ci13}]
Let $s \in (0,1)$ and $\ell \in [s, 2]$. Then, 
\begin{equation} \label{eq:interpolation_interior}
\int_T \int_{S_T} \frac{|(v-I_h v) (x) - (v-I_h v) (y)|^2}{|x-y|^{n+2s}} \, dy \, dx \lesssim h_T^{2(\ell-s)} |v|_{H^\ell({S_{S_T}})}^2 \quad \forall v \in \widetilde H^\ell(\Omega), 
\end{equation}
and, for $\alpha \in (0, \ell - s)$,
\begin{equation} \label{eq:interpolation_boundary}
\int_T \int_{S_T} \frac{|(v-I_h v) (x) - (v-I_h v) (y)|^2}{|x-y|^{n+2s}} \, dy \, dx \lesssim h_T^{2(\ell -s - \alpha)} | v |_{H^\ell_\alpha({S_{S_T}})}^2 \quad \forall v \in \widetilde H^\ell_\alpha(\Omega),
\end{equation}
with hidden constants that depend on $n$, $s$ and the shape-regularity of the meshes.
\end{pro}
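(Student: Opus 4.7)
The plan is to reduce both bounds to a Bramble--Hilbert argument on the star patch $S_T$, and to handle the finite element piece with a fractional inverse inequality. The starting observation is that $T\subset S_T$ and the integrand is nonnegative, so each left-hand side is dominated by the full Gagliardo seminorm $|v-I_h v|_{H^s(S_T)}^2$ obtained by integrating over $S_T\times S_T$. By shape-regularity one has $\mathrm{diam}(S_T)\lesssim h_T$, and there are only finitely many topological configurations of $S_T$, so the analysis can be carried out on a reference patch of unit diameter and then scaled back.

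For the unweighted bound \eqref{eq:interpolation_interior}, I would choose $p\in\mathcal{P}_1$ produced by an averaged Taylor / Deny--Lions construction on the larger patch $S_{S_T}$, for which scaling and a Bramble--Hilbert lemma extended to the fractional scale give
\[
\|v-p\|_{L^2(S_{S_T})}\lesssim h_T^{\ell}\,|v|_{H^\ell(S_{S_T})},\qquad |v-p|_{H^s(S_T)}\lesssim h_T^{\ell-s}\,|v|_{H^\ell(S_{S_T})}.
\]
Because $I_h$ reproduces degree-one polynomials, one has $v-I_h v=(v-p)-I_h(v-p)$ on $S_T$, and a triangle inequality splits $|v-I_h v|_{H^s(S_T)}$ into the first Bramble--Hilbert term above and $|I_h(v-p)|_{H^s(S_T)}$. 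The second term is then controlled by composing the $L^2$-stability of the Scott--Zhang operator on its star with a fractional inverse inequality for finite element functions on the patch, $|w_h|_{H^s(S_T)}\lesssim h_T^{-s}\|w_h\|_{L^2(S_T)}$ for $w_h\in\Vens_h$, proved by mapping $S_T$ to a reference patch and using equivalence of norms on the finite-dimensional space of piecewise affines on it. This yields $|I_h(v-p)|_{H^s(S_T)}\lesssim h_T^{-s}\|v-p\|_{L^2(S_{S_T})}\lesssim h_T^{\ell-s}|v|_{H^\ell(S_{S_T})}$, and combining the two contributions delivers \eqref{eq:interpolation_interior}.

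For the weighted estimate \eqref{eq:interpolation_boundary} the same strategy applies once the Bramble--Hilbert step is replaced by its weighted analogue. If $S_T$ is far from $\partial\Omega$, then $\delta$ is comparable to a positive constant on the patch and the estimate reduces to \eqref{eq:interpolation_interior} (with extra factors absorbed into the implicit constant, keeping in mind that $\delta\lesssim 1$). If $S_T$ meets or is close to $\partial\Omega$, then $\delta\lesssim h_T$ uniformly on the patch, and the weighted Bramble--Hilbert lemma supplies the additional factor $h_T^{-\alpha}$ precisely so as to produce the exponent $h_T^{\ell-s-\alpha}$. The restriction $\alpha\in(0,\ell-s)$ keeps $\delta^{2\alpha}$ integrable against the fractional kernel after scaling and guarantees that the weighted seminorm majorizes the unweighted one with a scale-consistent constant.

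The principal obstacle is the weighted Bramble--Hilbert step, since $\delta$ is not preserved by the affine map to the reference patch and its boundary behavior must be tracked when $S_T$ touches $\partial\Omega$. A clean way around this is to classify patches according to whether $\mathrm{dist}(S_T,\partial\Omega)\gtrsim h_T$ or not, reducing the weight on each patch to one of finitely many reference profiles and invoking the constants uniformly. The fractional inverse inequality and the $L^2$-stability of $I_h$ are, by contrast, routine once the reference-patch framework is set up; the only care required is that the inverse inequality must be applied to a patch of elements rather than a single element, which is why one needs the classification by topological type.
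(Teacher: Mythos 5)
First, a point of reference: the paper does not prove this proposition --- it is quoted from \cite{Bort17,Ci13} --- so there is no internal proof to compare against. Your argument for \eqref{eq:interpolation_interior} is sound and close to the standard one: dominating the left-hand side by $|v-I_hv|^2_{H^s(S_T)}$, subtracting a Deny--Lions polynomial on $S_{S_T}$, and treating $I_h(v-p)$ via $L^2$-stability plus a patchwise fractional inverse inequality is a legitimate route (the cited references typically instead use $|w|^2_{H^s(S_T)}\lesssim h_T^{-2s}\|w\|^2_{L^2(S_T)}+h_T^{2-2s}|w|^2_{H^1(S_T)}$ together with \eqref{eq:local_Scott-Zhang} for $k=0,1$; your version has the advantage of covering $\ell<1$ uniformly). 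Two standard caveats should be made explicit: the face-averaged Scott--Zhang operator is not purely $L^2$-stable, so you need either the element-averaged variant or the stability bound $\|I_hw\|_{L^2(T)}\lesssim \|w\|_{L^2(S_T)}+h_T^{\ell}|w|_{H^\ell(S_T)}$ (harmless here since it is applied to $w=v-p$); and the boundary-condition-preserving variant does not reproduce global affine polynomials on elements touching $\partial\Omega$, so the identity $v-I_hv=(v-p)-I_h(v-p)$ requires the usual separate treatment of boundary patches.

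The genuine gap is in \eqref{eq:interpolation_boundary}. For patches with $d(S_{S_T},\partial\Omega)\gtrsim h_T$ your reduction to \eqref{eq:interpolation_interior} does work, though for the right reason: one uses $\delta\gtrsim h_T$ on the patch to get $|v|^2_{H^\ell(S_{S_T})}\lesssim h_T^{-2\alpha}|v|^2_{H^\ell_\alpha(S_{S_T})}$, not that $\delta\lesssim 1$. For boundary patches, however, the entire content of the estimate is the weighted Bramble--Hilbert inequality $\|v-p\|_{L^2(S_{S_T})}\lesssim h_T^{\ell-\alpha}|v|_{H^\ell_\alpha(S_{S_T})}$ (and its $H^s$ analogue), in which an \emph{unweighted} norm is controlled by a seminorm whose weight $\delta^{2\alpha}$ degenerates on $\partial\Omega$. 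This is a weighted Poincar\'e inequality that does not follow from classifying patches into finitely many reference profiles: after scaling one must still prove the inequality on a reference boundary patch with a degenerate weight, with constants uniform in the patch, and for $2\alpha\ge 1$ the weight $\delta^{2\alpha}$ is not even in $A_2$, so no off-the-shelf result applies. Your stated justification for the restriction $\alpha\in(0,\ell-s)$ is also backwards: for $\alpha>0$ and $\delta$ small the weighted seminorm is \emph{dominated by} the unweighted one, not the other way around --- which is precisely why the boundary case cannot be reduced to \eqref{eq:interpolation_interior} and why a genuinely weighted Poincar\'e/Bramble--Hilbert lemma (as developed in \cite{AcBo17}) must be supplied. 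Until that lemma is stated and proved, your argument for \eqref{eq:interpolation_boundary} is incomplete.
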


%------------------------------------------------------------------------
%------------------------------------------------------------------------
\section{Quasi-uniform triangulations} \label{quasi-uniform}
%------------------------------------------------------------------------
%------------------------------------------------------------------------
Let $s \in \left(\frac12, 1\right)$ be given.
Throughout this section, we assume that the right-hand side $f$ belongs to $C^\beta(\overline\Omega)$ for some $\beta > 0$ and that approximations are performed on {\em quasi-uniform}  meshes \cite[Definition 1.140]{ErGu04}. In such a case, adding up the contributions on each patch of the form $T\times S_T$, and because of the a priori regularity of $u$  (recall \eqref{continuous_dependence}), we have the estimates
\begin{equation}\label{Scott-Zhang}
\left\{ \begin{aligned}
& \|u-I_hu\|_{\widetilde{H}^s(\Omega)} \lesssim \frac{h^{1/2-\epsilon}}{\epsilon} \|f\|_{C^\beta(\overline\Omega)}\\
& \|u-I_hu\|_{H^1(\Omega)} \lesssim \frac{h^{s-1/2-\epsilon}}{\epsilon} \|f\|_{C^\beta(\overline\Omega)}\\
\end{aligned}\right. \quad \forall\epsilon\in(0,1/2).
\end{equation}

Upon combining \eqref{eq:cea} and (\ref{Scott-Zhang}-top), it follows that the convergence rate of the finite element solutions towards the solution of the fractional Laplace problem in the energy norm is 
\begin{equation}\label{AcBo17_quasi-uniform}
\|u-u_h\|_{\widetilde{H}^s(\Omega)} \lesssim \frac{h^{1/2-\epsilon}}{\epsilon} \|f\|_{C^\beta(\overline\Omega)},\quad \forall\epsilon\in(0,1/2).
\end{equation}
Clearly, if $h$ is small enough, then taking $\epsilon = | \log h|^{-1}$ yields
\[
 \|u-u_h\|_{\widetilde{H}^s(\Omega)} \lesssim h^{1/2} | \log h | \|f\|_{C^\beta(\overline\Omega)} . 
 \]
In this section, we derive an error estimate in $H^1(\Omega)$-norm on quasi-uniform triangulations. For that purpose, we require an inverse inequality.

\begin{pro}[Inverse inequality] \label{pro:inverse_inequality}
Consider a sequence of discrete spaces $(\Vens_h)$ over quasi-uniform meshes. Then, it holds
\begin{equation}\label{Inverse_ineq}
 |v_h|_{H^1(\Omega)} \lesssim h^{s-1} \|v_h\|_{\widetilde H^s(\Omega)},\quad \forall h,\ \forall v_h\in\Vens_h.
\end{equation}
\end{pro}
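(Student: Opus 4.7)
The plan is to derive \eqref{Inverse_ineq} by real interpolation between the two classical endpoint inverse inequalities at $s=0$ and $s=1$.

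First, I would recall the standard integer-order inverse inequality for $P^1$ Lagrange finite elements on a shape-regular and quasi-uniform family of meshes, namely
\[
|v_h|_{H^1(\Omega)} \lesssim h^{-1} \|v_h\|_{L^2(\Omega)} \quad \forall v_h \in \Vens_h,
\]
see e.g.\ \cite[Lemma 1.138]{ErGu04}. Reformulated in operator terms, the identity $\mathrm{Id}\colon \Vens_h \to \Vens_h \subset H^1_0(\Omega)$ satisfies $\|\mathrm{Id}\|_{L^2(\Omega)\to H^1_0(\Omega)} \lesssim h^{-1}$ and trivially $\|\mathrm{Id}\|_{H^1_0(\Omega) \to H^1_0(\Omega)} \le 1$.

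Next, I would apply the real interpolation theorem to $\mathrm{Id}$, with source spaces $L^2(\Omega)$ and $H^1_0(\Omega)$ and common target space $H^1_0(\Omega)$. Because the interpolation of a Banach space with itself returns the same space, and because the identification $\tHs = [L^2(\Omega), H^1_0(\Omega)]_s$ with equivalent norms holds on a Lipschitz domain (see the first Remark of \S\ref{Sobolev}), I obtain
\[
\|\mathrm{Id}\|_{\tHs \to H^1_0(\Omega)} \lesssim \bigl( h^{-1} \bigr)^{1-s} \cdot 1^{s} = h^{s-1},
\]
which is exactly \eqref{Inverse_ineq}.

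The main delicate point is the passage from the real interpolation norm on $\tHs$ to the Gagliardo-seminorm-based norm \eqref{eq:def_innerprod} used throughout the paper. This is precisely the issue raised in the second Remark of \S\ref{Sobolev}: the two norms are only equivalent, not equal. However, for a fixed Lipschitz domain $\Omega$ and fixed $s \in (\tfrac12, 1)$, the equivalence constants depend on $\Omega$ and $s$ but are independent of $h$, so the $h^{s-1}$ scaling is preserved. A self-contained alternative, avoiding the abstract interpolation machinery, would be to unfold the $K$-functional characterization of $\tHs$, split the integral over $t$ at the threshold $t \sim h$, and apply the endpoint inverse inequality on each piece; this recovers the same bound with the same $h$-dependence.
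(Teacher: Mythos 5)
Your proof is correct and is essentially the paper's own argument: the authors also obtain \eqref{Inverse_ineq} by interpolating the trivial bound $|v_h|_{H^1(\Omega)} \le |v_h|_{H^1(\Omega)}$ against the standard global inverse inequality $|v_h|_{H^1(\Omega)} \le h^{-1}\|v_h\|_{L^2(\Omega)}$, using $\tHs = [L^2(\Omega), H^1_0(\Omega)]_s$. Your additional remark that the equivalence constants between the interpolation norm and the Gagliardo norm are independent of $h$ is a worthwhile clarification of a point the paper leaves implicit.
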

\begin{proof}
It follows immediately by interpolation of the trivial identity $ |v_h|_{H^1(\Omega)} \le |v_h|_{H^1(\Omega)}$ and the standard global inverse inequality (for example, \cite[Corollary 1.141]{ErGu04})
\[
 |v_h|_{H^1(\Omega)} \le h^{-1} \|v_h\|_{L^2(\Omega)}.
\]
\end{proof}

From Proposition \ref{pro:inverse_inequality}, we infer a first bound on the error in the $H^1(\Omega)$-norm.

\begin{pro}[Convergence in $H^1(\Omega)$ on uniform meshes] \label{pro:H1_uniform}
Assume that $s \in (\frac12,1)$ and $f \in C^\beta(\overline\Omega)$ for some $\beta >0$. Consider a sequence of discrete spaces $(\Vens_h)$ over quasi-uniform meshes. Then, for $h$ sufficiently small, it holds
\[ 
\|u-u_h\|_{H^1(\Omega)} \lesssim h^{s-1/2} | \log h | \|f\|_{C^\beta(\overline \Omega)}. 
\]
\end{pro}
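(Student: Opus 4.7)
The plan is to insert the Scott--Zhang interpolant $I_h u$ as an intermediate term and exploit two different estimates on the two pieces, paying the suboptimal inverse inequality factor $h^{s-1}$ only on the discrete difference $I_h u - u_h$, whose energy norm is already small.

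First I would split, by the triangle inequality,
\[
\|u-u_h\|_{H^1(\Omega)} \le \|u - I_h u\|_{H^1(\Omega)} + \|I_h u - u_h\|_{H^1(\Omega)}.
\]
The first term is controlled directly by the interpolation estimate (\ref{Scott-Zhang})--bottom, which gives $\|u - I_h u\|_{H^1(\Omega)} \lesssim \epsilon^{-1} h^{s-1/2-\epsilon} \|f\|_{C^\beta(\overline\Omega)}$ for every $\epsilon \in (0,1/2)$.

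Next, since $I_h u - u_h \in \Vens_h$, I would apply the inverse inequality of Proposition~\ref{pro:inverse_inequality} to the $H^1$-seminorm of $I_h u - u_h$ and separately Poincar\'e to the $L^2$ part (which will be of higher order and therefore negligible), obtaining
\[
\|I_h u - u_h\|_{H^1(\Omega)} \lesssim h^{s-1} \|I_h u - u_h\|_{\widetilde H^s(\Omega)}.
\]
A further triangle inequality in the energy norm, combined with the interpolation bound (\ref{Scott-Zhang})--top and the energy error estimate \eqref{AcBo17_quasi-uniform}, gives
\[
\|I_h u - u_h\|_{\widetilde H^s(\Omega)} \le \|I_h u - u\|_{\widetilde H^s(\Omega)} + \|u - u_h\|_{\widetilde H^s(\Omega)} \lesssim \frac{h^{1/2-\epsilon}}{\epsilon} \|f\|_{C^\beta(\overline\Omega)}.
\]
Multiplying by the inverse inequality factor $h^{s-1}$ yields the same bound $\epsilon^{-1} h^{s-1/2-\epsilon}\|f\|_{C^\beta(\overline\Omega)}$ as for the interpolation part.

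Finally, I would optimize the free parameter: for $h$ small enough, the choice $\epsilon = |\log h|^{-1}$ gives $h^{-\epsilon} = e$ and $\epsilon^{-1} = |\log h|$, so
\[
\|u - u_h\|_{H^1(\Omega)} \lesssim h^{s-1/2}\,|\log h|\,\|f\|_{C^\beta(\overline\Omega)},
\]
as claimed. There is no real obstacle here; the only point that deserves care is to note that the $L^2$-component of $\|u-u_h\|_{H^1(\Omega)}$ is dominated by the energy error (via Poincar\'e on $\widetilde H^s(\Omega)$) and converges at rate $h^{1/2}|\log h|$, which is faster than $h^{s-1/2}|\log h|$ since $s<1$, so it does not affect the final rate.
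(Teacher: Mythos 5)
Your proposal is correct and follows essentially the same route as the paper's own proof: triangle inequality through $I_h u$, the $H^1$ interpolation bound for $u - I_h u$, the inverse inequality combined with a second triangle inequality in the energy norm for $I_h u - u_h$, and finally $\epsilon = |\log h|^{-1}$. Your extra remark on controlling the $L^2$-component via Poincar\'e is a minor point the paper leaves implicit, but it changes nothing substantive.
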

\begin{proof}
Let $\epsilon\in(0,1/2)$. In first place, using the triangle inequality and the interpolation estimate (\ref{Scott-Zhang}-bottom), we obtain
\[
\begin{aligned}
\|u-u_h\|_{H^1(\Omega)} &\le \|u-I_hu\|_{H^1(\Omega)} + \|I_hu-u_h\|_{H^1(\Omega)}\\
&\lesssim \frac{h^{s-1/2-\epsilon}}{\epsilon} \|f\|_{C^\beta(\overline \Omega)} + \|I_hu-u_h\|_{H^1(\Omega)} .
\end{aligned}
\]
Therefore, we need to bound $\|I_hu-u_h\|_{H^1(\Omega)}$. By the inverse inequality \eqref{Inverse_ineq} and using again the triangle inequality, it follows 
\[
\|I_hu-u_h\|_{H^1(\Omega)} \lesssim h^{s-1}\left( \|I_hu-u\|_{\widetilde H^s(\Omega)}+\|u-u_h\|_{\widetilde H^s(\Omega)} \right) .
\]
Finally, by bounding the right hand side above using (\ref{Scott-Zhang}-top) and 
\eqref{AcBo17_quasi-uniform}, we deduce
\[
\|u-u_h\|_{H^1(\Omega)} \lesssim \frac{h^{s-1/2-\epsilon}}{\epsilon} \|f\|_{C^\beta(\overline\Omega)}, \quad \forall \epsilon \in (0, 1/2).
\]
The proof is concluded upon setting $\epsilon = | \log h |^{-1}$ in the estimate above.
\end{proof}

For comparison with the results in the next section, we express the order of convergence in terms of the number of degrees of freedom. Since the meshes are quasi-uniform, ${\rm dim} \Vens_h \simeq h^{-n}$.

\begin{cor}[Complexity for uniform meshes] \label{cor:complexity}
With the same hypotheses as in Proposition \ref{pro:H1_uniform}, it holds
\[ 
\|u-u_h\|_{H^1(\Omega)} \lesssim \left( \rm{dim} \Vens_h \right)^{\frac{1/2-s}{n}} \log\left(\rm{dim} \Vens_h\right) \|f\|_{C^\beta(\overline\Omega)}. 
\]
\end{cor}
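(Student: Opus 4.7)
The plan is simply to substitute the quasi-uniformity relation between mesh size and dimension into the estimate from Proposition~\ref{pro:H1_uniform}, so no new ideas are needed.

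First I would invoke the standard fact that for a shape-regular, quasi-uniform family of triangulations of a bounded domain $\Omega\subset\R^n$, the number of vertices (and hence the dimension of $\Vens_h$) satisfies $\mathrm{dim}\,\Vens_h \simeq h^{-n}$, with constants depending only on $\Omega$ and the shape-regularity. Inverting this gives $h \simeq (\mathrm{dim}\,\Vens_h)^{-1/n}$.

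Next I would plug this directly into the estimate from Proposition~\ref{pro:H1_uniform}. The factor $h^{s-1/2}$ becomes $(\mathrm{dim}\,\Vens_h)^{-(s-1/2)/n} = (\mathrm{dim}\,\Vens_h)^{(1/2-s)/n}$, while the logarithm transforms as $|\log h| \simeq \tfrac{1}{n}\log(\mathrm{dim}\,\Vens_h)$, with the factor $1/n$ absorbed into the hidden constant. Combining these yields the stated bound.

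There is no real obstacle; the only thing to check is that the condition ``$h$ sufficiently small'' from Proposition~\ref{pro:H1_uniform} translates into ``$\mathrm{dim}\,\Vens_h$ sufficiently large,'' which is immediate from the quasi-uniform scaling. Thus the corollary is nothing more than a change of variable recording the same error estimate in terms of the complexity of the discrete problem, which will be the natural quantity to compare against the graded-mesh results of Section~\ref{graded}.
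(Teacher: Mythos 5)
Your proposal is correct and matches the paper's (implicit) argument exactly: the corollary is stated immediately after the observation that $\mathrm{dim}\,\Vens_h \simeq h^{-n}$ on quasi-uniform meshes, and is obtained precisely by substituting $h \simeq (\mathrm{dim}\,\Vens_h)^{-1/n}$ into the estimate of Proposition~\ref{pro:H1_uniform}, with the $|\log h| \simeq \frac{1}{n}\log(\mathrm{dim}\,\Vens_h)$ factor handled as you describe.
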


%------------------------------------------------------------------------
%------------------------------------------------------------------------
\section{Graded meshes} \label{graded}
%------------------------------------------------------------------------
%------------------------------------------------------------------------
The results from the preceding section establish that, given $s \in \left(\frac12,1\right)$, finite element solutions converge to the solution to \eqref{eq:dirichlet} in the $H^1(\Omega)$-norm. Nevertheless, the low regularity of the solution substantially affects the convergence rate.  
We recall that, according to Proposition~\ref{pro:regularity}, the regularity assumptions for the right hand side are quantified by $\beta$. This, in turn, determines the regularity of the solution (with the differentiability quantified by $\ell$ and the boundary degeneracy by $\alpha$). Finally, one has to take into account the $\widetilde{H}^s(\Omega)$-norm in which the error is measured.

Here we focus in two-dimensional problems, and exploit regularity in weighted fractional spaces by performing approximations on a sequence of suitably refined meshes. Since the solution is known to be more singular near the boundary of the domain, increased convergence rates are achieved by placing more degrees of freedom in that zone. More precisely, given a number $\mu \ge 1$ and a global mesh parameter $h$, we set the element diameters to be
\begin{equation} \label{eq:grading}
 h_T \simeq \left\lbrace
\begin{array}{ll}
 h^{\mu} &
\mbox{if } S_T\cap \partial \Omega\neq \emptyset, \\
h  \, d(T,\partial \Omega)^{(\mu-1)/\mu} & \mbox{otherwise.}
 \end{array}
 \right.
 \end{equation}

In definition \eqref{eq:grading}, considering $\mu = 1$ corresponds to uniform meshes, whereas for $\mu > 1$, elements become smaller as they approach $\pa \Omega$, which yields the so-called {\em graded} meshes. The mesh-size parameter $h$ has the intuitive interpretation of controlling the number of degrees of freedom as the mesh-size does for uniform meshes. Indeed, we have \cite{BoNoSa18}
\begin{equation} \label{eq:dofs}
\rm{dim} \Vens_h \simeq \left\lbrace \begin{array}{ll}
h^{-2}, & \mbox{ if } \mu \in [1,2), \\
h^{-2} |\log h| & \mbox{ if } \mu = 2, \\
h^{-\mu}  & \mbox{ if } \mu > 2. \\
\end{array} \right.
\end{equation}
As we shall see below, the optimal choice of $\mu$ depends on the parameters $s$, $\beta$, $\ell$ and $\alpha$. 

\begin{rmk}[Choice of $\mu$] \label{rmk:mu}
Estimate \eqref{eq:dofs} essentially says that, when grading according to \eqref{eq:grading}, considering the dimension of the resulting finite element space as a function of $\mu$, all increments in $\mu$ are ``for free'' as long as $\mu < 2$. When $\mu > 2$, there is an increment in the number of degrees of freedom with respect to $h$  that balances the expected gain due to the increase in differentiability. So, for smooth right-hand sides, optimal order of convergence is attained by imposing $\mu = 2$. Nevertheless, it may also be the case that the same order of convergence is attained by taking a lower $\mu$, which in turn would allow for less stringent hypotheses on $f$.
Keeping the grading as low as possible is of importance, for example, in order to avoid unnecessarily ill-conditioned systems. For the problems under consideration it is known \cite{AiMcTr99} that the finite element stiffness matrices ${\bf A}_h$ are conditioned according to $\kappa({\bf A}_h) \simeq ({\rm dim} \Vens_h)^{2s/n} \left( \frac{h_{max}}{h_{min}} \right)^{n-2s}$. Therefore, for two-dimensional problems, for meshes graded according to \eqref{eq:grading}, since $h_{max} \simeq h$ and $h_{min} \simeq h^\mu$, we deduce
\[
\kappa({\bf A}_h) \simeq ({\rm dim} \Vens_h)^{s} \, h^{(1-\mu)(2-2s)} \simeq 
\left\lbrace \begin{array}{ll}
h^{2-4s - \mu (2 - 2s)}, & \mbox{ if } \mu \in [1,2), \\
h^{-2} |\log h|^s & \mbox{ if } \mu = 2, \\
h^{2-2s-\mu(2-s)}  & \mbox{ if } \mu > 2. \\
\end{array} \right.
\]
\end{rmk}

\begin{rmk}[Problems in 1d or in 3d] \label{rmk:1d_3d}
Let us briefly consider the case of a one-dimensional, or of a three-dimensional, problem. For the one-dimensional case, it is easily checked that the counterpart of (\ref{eq:dofs}) may be written $\rm{dim}\Vens_h \simeq h^{-1}$ independently of $\mu$. Since $\mu$ can be taken as large as needed,  it is possible (computationally) to recover the optimal linear convergence order in the $H^1(\Omega)$ norm. See Remark \ref{rmk:1d_graded} and the experiments in \S\ref{sub:1d}.

On the other hand, for the three-dimensional case one can check that for graded meshes defined as in (\ref{eq:grading}), the counterpart of (\ref{eq:dofs}) now writes
\[
\rm{dim} \Vens_h \simeq \left\lbrace \begin{array}{ll}
h^{-3}, & \mbox{ if } \mu \in [1,3/2), \\
h^{-3} |\log h| & \mbox{ if } \mu = 3/2, \\
h^{-2\mu}  & \mbox{ if } \mu > 3/2. \\
\end{array} \right.
\]
This limits the control one may get with respect to $\mu$ to values in $[1,3/2)$, in constrast to $\mu\in[1, 2)$ for the two-dimensional case, and as a consequence, limits the order of convergence that can be obtained with this grading strategy in three-dimensional problems.
A natural cure for this problem, that stems from the anisotropic behavior of the solution near the boundary (cf. \eqref{eq:boundary-grubb}), is to use {\em anisotropic} meshes~\cite{Apel99}. 
\end{rmk}

%------------------------------------------------------------------------
\subsection{Interpolation estimates}\label{subseq:interp_estimates_graded_meshes}
%------------------------------------------------------------------------

Our first task is to bound a global interpolation error; naturally this is achieved by adding up local estimates. In view of the grading \eqref{eq:grading}, the key property is that, when summing up the local interpolation estimates for elements not touching $\partial \Omega$, the exponent in $d(T,\partial \Omega)$ is zero. This explicitly links the regularity of the function to be interpolated with the order of the norm in which we are measuring the error and with the grading parameter.

We illustrate the above discussion with an example: assuming that $f \in C^{\beta}(\overline \Omega)$ for some $\beta \in (0,2-2s)$, what is the minimal grading required to optimally bound --in the energy norm-- the interpolation error for the solution to \eqref{eq:dirichlet}? \\
Once we have set $\beta$ in the second part of Proposition \ref{pro:regularity}, we find that $u \in \widetilde H^\ell_\alpha (\Omega)$ for all
\begin{equation}\label{eq:constraints_on_l_alpha}
\ell \in (s + 1/2, \beta + 2s), \quad \alpha > \ell - s -1/2.
\end{equation}
Grading meshes according to \eqref{eq:grading}, from \eqref{eq:interpolation_interior} we deduce, for every $T$ such that $S_{S_T}\cap \partial \Omega = \emptyset$,
\[ \begin{aligned}
\int_T \int_{S_T} \frac{|(u-I_h u) (x) - (u-I_h u) (y)|^2}{|x-y|^{n+2s}} \, dy \, dx & \lesssim h_T^{2(\ell-s)} |u|_{H^\ell({S_{S_T}})}^2 \\
& \lesssim h^{2(\ell - s)} d(T, \pa\Omega)^{2 \frac{(\mu-1)(\ell - s)}{\mu}} | u |^2_{H^\ell({S_{S_T}})} . 
\end{aligned} \]
Observe that $\delta(x,y) \simeq d(T, \pa\Omega)$ for all $x,y \in S_T$ when $S_{S_T}\cap \partial \Omega = \emptyset$. Thus, in this case, we get
\[ \begin{aligned}
\int_T \int_{S_T} \frac{|(u-I_h u) (x) - (u-I_h u) (y)|^2}{|x-y|^{n+2s}} \, dy \, dx & \lesssim h^{2(\ell - s)} d(T, \pa\Omega)^{2 \left( \frac{(\mu-1)}{\mu} (\ell - s) - \alpha \right)} | u |^2_{H^\ell_\alpha({S_{S_T}})} . 
\end{aligned} \]
Imposing the exponent on $d(T,\partial \Omega)$ to be zero, we obtain the bound in energy norm
\begin{equation}\label{eq:energy_norm_for_graded_meshes}
\int_T \int_{S_T} \frac{|(u-I_h u) (x) - (u-I_h u) (y)|^2}{|x-y|^{n+2s}} \, dy \, dx
\lesssim h^{2(\ell - s)} | u |_{H^\ell_\alpha({S_{S_T}})}^2.
\end{equation}
Cancelling the exponent corresponds to choosing $\alpha$ equal to
\begin{equation} \label{eq:choice_alpha}
\alpha = (\ell - s) \frac{(\mu - 1)}{\mu}. 
\end{equation}
On the other hand, for every element $T$ such that $S_{S_T}\cap \partial \Omega \neq \emptyset$, we point out that this choice of the parameter $\alpha$ again yields the bound (\ref{eq:energy_norm_for_graded_meshes}), this time with the help of (\ref{eq:interpolation_boundary}). Summing up all contributions, we conclude that
\[ |v -I_h v |_{H^s(\Omega)} \lesssim h^{\ell - s} | v |_{\widetilde{H}^\ell_\alpha(\Omega)} \quad \forall v \in \widetilde{H}^\ell_\alpha(\Omega).\]
To realize (\ref{eq:constraints_on_l_alpha}) when setting $\alpha$ according to \eqref{eq:choice_alpha}, we are lead to the restriction
\[
\mu > 2 (\ell - s).
\]
Since we require $\mu \le 2$ but we also want to maximize $\ell-s$ (as this will be the resulting order of the interpolation error), it suffices to set
\begin{itemize}
\item for $\beta \in (0, 1-s]$: $\ell = \beta + 2s - 2\epsilon$, $\alpha = \beta + s - 1/2 - {\epsilon}$, $\mu = 2(\beta + s)$,  for some  $\epsilon \in (0, \beta + s - 1/2)$.
The resulting order is $h^{\beta + s - 2\epsilon}$.
\item for $\beta \in [1-s, 2-2s)$: $\ell = 1 + s - 2\epsilon$, $\alpha = 1/2 - {\epsilon}$, $\mu = 2$, for some $\epsilon \in (0, 1/2)$. The resulting order is $h^{1-2\epsilon}$.
\end{itemize}
\begin{rmk}[Optimal grading for energy norm] \label{rmk:grading_energy}
We remark that, in the case $\beta \in (0, 1-s]$, any other grading $\mu \in [2(\beta+s), 2]$ also delivers optimal interpolation rates. On the other hand, the interpolation estimate for $\beta = 1-s$, combined with \eqref{eq:weighted_regularity_1/2} and \eqref{eq:cea}, guarantees the linear (up to a logarithm) order of convergence  of the finite element approximations to \eqref{eq:dirichlet}. See \eqref{eq:Scott-Zhang_weightedv2s} and \eqref{eq:approximation_weighted} below.
\end{rmk}

A corollary of the previous discussion is that, for a fixed right hand-side $f$, the minimal grading to obtain optimal convergence estimates depends on the norm in which the error is measured. The next proposition further illustrates this point.

\begin{pro}[Interpolation error in $H^1(\Omega)$ over graded meshes] \label{pro:weighted_interpolation}
Let $\ell \in (1, 2]$ and $\alpha \in [0, \ell - 1)$.  Assume the meshes are constructed under the grading hypothesis \eqref{eq:grading} setting $\mu = \frac{\ell - 1}{\ell - 1 -\alpha}$ therein. Then, it holds that
\begin{equation} \label{eq:weighted_interpolation}
|v -I_h v |_{H^1(\Omega)} \lesssim h^{\ell - 1} | v |_{\widetilde{H}^\ell_\alpha(\Omega)} \quad \forall v \in \widetilde{H}^\ell_\alpha(\Omega).
\end{equation}
\end{pro}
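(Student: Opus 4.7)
The plan is to follow the same strategy as the energy-norm analysis carried out just before the proposition, i.e. derive the estimate elementwise and then sum, distinguishing elements that touch $\partial\Omega$ from those that do not. The grading exponent $\mu=\frac{\ell-1}{\ell-1-\alpha}$ should emerge as the algebraic condition that makes the leftover power of $d(T,\partial\Omega)$ cancel, exactly as the choice \eqref{eq:choice_alpha} did in the energy-norm case.

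First I would fix $T\in\mathcal{T}_h$ with $S_{S_T}\cap\partial\Omega=\emptyset$. On such an interior patch $\delta(x)\simeq\delta(x,y)\simeq d(T,\partial\Omega)$ for all $x,y\in S_T$, so for $\ell\in(1,2)$ the weighted seminorm satisfies
\[
|v|^2_{H^\ell_\alpha(S_T)} \simeq d(T,\partial\Omega)^{2\alpha}\,|v|^2_{H^\ell(S_T)},
\]
and the same relation holds with the integer seminorm $|v|^2_{H^2(S_T)}$ when $\ell=2$. Invoking the unweighted local Scott–Zhang bound \eqref{eq:local_Scott-Zhang} and the grading rule $h_T\simeq h\,d(T,\partial\Omega)^{(\mu-1)/\mu}$,
\[
|v-I_hv|^2_{H^1(T)} \lesssim h_T^{2(\ell-1)}|v|^2_{H^\ell(S_T)} \simeq h^{2(\ell-1)}\,d(T,\partial\Omega)^{2(\ell-1)\frac{\mu-1}{\mu}-2\alpha}\,|v|^2_{H^\ell_\alpha(S_T)}.
\]
The exponent on $d(T,\partial\Omega)$ vanishes exactly for $\mu=\frac{\ell-1}{\ell-1-\alpha}$, yielding
\[
|v-I_hv|^2_{H^1(T)} \lesssim h^{2(\ell-1)}|v|^2_{H^\ell_\alpha(S_T)}.
\]

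For elements with $S_T\cap\partial\Omega\neq\emptyset$ I would use the weighted local estimate \eqref{Scott-Zhang_weighted} directly, observing that on such elements $h_T\simeq h^{\mu}$ and that our hypothesis $\alpha<\ell-1$ keeps $\alpha$ in the admissible range $[0,\ell-k]$ with $k=1$; then
\[
|v-I_hv|^2_{H^1(T)} \lesssim h_T^{2(\ell-1-\alpha)}|v|^2_{H^\ell_\alpha(S_T)} \simeq h^{2\mu(\ell-1-\alpha)}|v|^2_{H^\ell_\alpha(S_T)} = h^{2(\ell-1)}|v|^2_{H^\ell_\alpha(S_T)},
\]
using again $\mu(\ell-1-\alpha)=\ell-1$. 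This identical bound on both classes of elements is the whole point of the chosen grading.

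To conclude I would sum over $T\in\mathcal{T}_h$ and use shape-regularity (finite overlap of the stars $S_T$) to get $\sum_T|v|^2_{H^\ell_\alpha(S_T)}\lesssim |v|^2_{\widetilde H^\ell_\alpha(\Omega)}$, obtaining \eqref{eq:weighted_interpolation}. The only mildly delicate point I anticipate is the equivalence $|v|^2_{H^\ell_\alpha(S_T)}\simeq d(T,\partial\Omega)^{2\alpha}|v|^2_{H^\ell(S_T)}$ on interior patches for non-integer $\ell$: it relies on $\delta(x,y)\simeq d(T,\partial\Omega)$ uniformly over $S_T\times S_T$ when $S_T$ stays away from $\partial\Omega$, which follows from shape regularity together with the bound $\mathrm{diam}(S_T)\lesssim h_T\le d(T,\partial\Omega)$ that the grading \eqref{eq:grading} provides (after possibly reducing $h$). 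Everything else is a direct transcription of the energy-norm argument, with the weighted estimate \eqref{Scott-Zhang_weighted} replacing \eqref{eq:interpolation_boundary}.
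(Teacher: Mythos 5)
Your argument is essentially identical to the paper's proof: the same element-by-element split (unweighted Scott--Zhang estimate \eqref{eq:local_Scott-Zhang} plus $\delta(x,y)\simeq d(T,\partial\Omega)$ on interior patches, the weighted estimate \eqref{Scott-Zhang_weighted} with $h_T\simeq h^\mu$ on boundary patches), the same cancellation of the exponent on $d(T,\partial\Omega)$ forcing $\mu=\frac{\ell-1}{\ell-1-\alpha}$, and the same summation via finite overlap. The only cosmetic slip is that you split the interior case on $S_{S_T}\cap\partial\Omega=\emptyset$ but the boundary case on $S_T\cap\partial\Omega\neq\emptyset$, which leaves a thin intermediate layer of elements uncovered as written; these are handled by the boundary argument verbatim (for them $d(T,\partial\Omega)\lesssim h_T$, hence $h_T\lesssim h^\mu$), or simply by using $S_T$ in both cases as the paper does.
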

\begin{proof}
We make use of the local interpolation identities \eqref{eq:local_Scott-Zhang} and \eqref{Scott-Zhang_weighted}. Indeed, if $S_T\cap \partial \Omega=\emptyset$, 
\[
| v - I_h v |_{H^1(T)}^2 \lesssim h_T^{2(\ell - 1)} | v |_{H^\ell(S_T)}^2  \lesssim  h^{2(\ell - 1)} d(T,\partial \Omega)^{\frac{2(\ell-1)(\mu-1)}{\mu}}  |v|^2_{H^\ell(S_T)} .
\]
Because $\delta(x,y) \simeq d(T,\partial \Omega)$ for all $x,y \in T$, 
we deduce
\[
| v - I_h v |_{H^1(T)}^2 \lesssim h^{2(\ell - 1)} d(T,\partial \Omega)^{\frac{2(\ell-1)(\mu-1)}{\mu} - 2 \alpha}  | v |^2_{H^\ell_\alpha(S_T)} .
\]
In order to make the exponent in the distance to the boundary term to vanish, we require that $\mu = \frac{\ell - 1}{\ell - 1 -\alpha}$, and conclude
\[
| v - I_h v |_{H^1(T)}^2 \lesssim h^{2(\ell - 1)} | v |^2_{H^\ell_\alpha(S_T)} \quad \mbox{if } S_T \cap \pa\Omega = \emptyset.
\]

On the other hand, if $S_T\cap \partial \Omega\ne\emptyset$, using our choice of $\mu$ we deduce
\[
| v - I_h v |_{H^1(T)}^2 \lesssim h_T^{2(\ell - 1 - \alpha)} | v |_{H^\ell_\alpha(S_T)}^2 = h^{2 \mu (\ell - 1 - \alpha)} | v |_{H^\ell_\alpha(S_T)}^2 = h^{2 (\ell - 1)} | v |_{H^\ell_\alpha(S_T)}^2 .
\]
The claim follows immediately.
\end{proof}

Combining the general interpolation estimate \eqref{eq:weighted_interpolation} with the regularity estimates from Proposition \ref{pro:regularity}, we optimally bound the interpolation error in $H^1(\Omega)$. We show that, with respect to the a priori estimates from Proposition \ref{pro:H1_uniform} and Corollary~\ref{cor:complexity}, it is possible to double the interpolation error rate by using graded meshes.

\begin{pro}[Interpolation of the solution] 
In problem \eqref{eq:dirichlet}, assume that $f \in C^\beta(\overline \Omega)$ for some $\beta>0$ and that triangulations are constructed according to \eqref{eq:grading} with $\mu = 2$. Then, for the Scott-Zhang interpolation operator $I_h$,
\begin{equation} \label{Scott-Zhang_weighted_v2}
| u - I_h u |_{H^1(\Omega)} \lesssim h^{2(s-1/2-\epsilon)} | u |_{H^{2s - 2\epsilon}_{s - 1/2 - \epsilon}(\Omega)} \lesssim  \frac{h^{2(s-1/2-\epsilon)}}{\epsilon} \|f\|_{C^\beta(\overline\Omega)},\quad \forall\epsilon \in (0,\beta/2).
\end{equation}
In terms of degrees of freedom, for sufficiently refined meshes, the estimate above reads
\[
| u - I_h u |_{H^1(\Omega)} \lesssim ( {\rm dim} \Vens_h )^{1/2-s} \log ({\rm dim} \Vens_h ) \|f\|_{C^\beta(\overline\Omega)}.
\]
\end{pro}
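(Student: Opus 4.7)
The plan is to apply Proposition~\ref{pro:weighted_interpolation} with a careful choice of $\ell$ and $\alpha$ that forces the required grading to equal $\mu = 2$, and then to bound the weighted seminorm on the right-hand side via the regularity estimate \eqref{eq:weighted_regularity}.

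First, I would set $\ell := 2s - 2\epsilon$ and $\alpha := s - 1/2 - \epsilon$. For $\epsilon$ sufficiently small one has $\ell \in (1,2]$ and $\alpha \in [0, \ell-1)$, and a direct computation gives
\[
\frac{\ell - 1}{\ell - 1 - \alpha} \;=\; \frac{2s - 1 - 2\epsilon}{s - 1/2 - \epsilon} \;=\; 2,
\]
so Proposition~\ref{pro:weighted_interpolation} applies precisely at the prescribed grading $\mu = 2$. Applied to $v = u$, it yields the first inequality in \eqref{Scott-Zhang_weighted_v2}, namely $|u - I_h u|_{H^1(\Omega)} \lesssim h^{2(s-1/2-\epsilon)} |u|_{\widetilde H^{2s-2\epsilon}_{s-1/2-\epsilon}(\Omega)}$.

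Next I would invoke \eqref{eq:weighted_regularity} with the same $(\ell,\alpha)$. Its parameter constraints $\ell \in (s+1/2,\beta+2s)$ and $\alpha > \ell - s - 1/2$ reduce to $\epsilon < (s-1/2)/2$ and $\epsilon > 0$, both automatic for small $\epsilon$, and the two denominators specialize to $\beta + \ell - 2s = \beta - 2\epsilon$ and $1/2 + \alpha + s - \ell = \epsilon$. For $\epsilon \in (0,\beta/4)$ the first is bounded below by $\beta/2$, giving $|u|_{\widetilde H^{2s-2\epsilon}_{s-1/2-\epsilon}(\Omega)} \lesssim \epsilon^{-1} \|f\|_{C^\beta(\overline\Omega)}$. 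If $\beta \ge 2-2s$, one first replaces $\beta$ by $\min(\beta,2-2s-\eta)$ with a small $\eta>0$ to fit the range of Proposition~\ref{pro:regularity}; this only weakens the right-hand side. Concatenating with the previous step delivers the full chain in \eqref{Scott-Zhang_weighted_v2}.

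For the degrees-of-freedom form, I would plug \eqref{eq:dofs} with $\mu = 2$ to write $N := \dim \Vens_h \simeq h^{-2}|\log h|$, then pick $\epsilon = 1/(2|\log h|)$. This makes $h^{-2\epsilon}$ uniformly bounded and $\epsilon^{-1} \lesssim |\log h|$, so the bound becomes $h^{2s-1}|\log h| \, \|f\|_{C^\beta(\overline\Omega)}$; using $h^{2s-1} \simeq (|\log h|/N)^{s-1/2}$ and absorbing all remaining logarithmic factors into a single $\log N$ completes the DOF estimate. The main obstacle is organizational rather than analytical: one has to identify the choice $(\ell,\alpha) = (2s-2\epsilon, s-1/2-\epsilon)$ as the one producing $\mu = 2$, and then juggle the several smallness constraints on $\epsilon$ (from $\ell > 1$, from \eqref{eq:weighted_regularity}, and from $\epsilon < \beta/2$) so that they are consistent. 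All the real analytic content is already packaged in Propositions~\ref{pro:regularity} and~\ref{pro:weighted_interpolation}.
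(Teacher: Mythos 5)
Your proposal is correct and follows essentially the same route as the paper's proof: the paper likewise sets $\ell = 2s-2\epsilon$ and $\alpha = \tfrac{\ell-1}{2} = s-\tfrac12-\epsilon$ so that $\mu = \frac{\ell-1}{\ell-1-\alpha} = 2$, then invokes Proposition~\ref{pro:weighted_interpolation} for the first inequality and the regularity estimate \eqref{eq:weighted_regularity} for the second. Your extra bookkeeping (verifying the admissibility constraints on $\epsilon$, truncating $\beta$ to fit the range $(0,2-2s)$, and the explicit degrees-of-freedom computation) only makes explicit what the paper leaves implicit.
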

\begin{proof}
From the second part of Proposition \ref{pro:regularity}, we know that $u \in \widetilde H^\ell_\alpha (\Omega)$ for all $\ell \in (s + 1/2, \beta + 2s)$ and $\alpha > \ell - s - 1/2$.  
Thus, given $\epsilon$ sufficiently small, we set $\ell = 2s - 2\epsilon$; we remark that choosing $\alpha = \frac{\ell - 1}{2}$ satisfies the restriction for this parameter, and yields $\mu = 2 = \frac{\ell - 1}{\ell - 1 - \alpha}$. 

Therefore, the first inequality in \eqref{Scott-Zhang_weighted_v2} follows from Proposition \ref{pro:weighted_interpolation}. The second inequality is a consequence of the regularity estimate \eqref{eq:weighted_regularity}.
\end{proof}

\begin{rmk}[Higher regularity assumptions] A question in order is whether the order of the interpolation error can be increased if we demand more regularity on the right-hand side $f$. For example, let us assume that $f \in C^2(\overline\Omega)$, so that we can take $\ell = 2 - 2\epsilon$ and $\alpha > 3/2 - s - 2\epsilon$ in \eqref{eq:weighted_regularity}, so that
\[
|u|_{\widetilde H^{2-2\epsilon}_{\alpha}(\Omega)} \lesssim \frac{\| f \|_{C^2(\overline\Omega)}}{\alpha - (3/2 - s - 2\epsilon)}.
\]
The same computations as in the proof of Proposition \ref{pro:weighted_interpolation} show that, to maximize the interpolation order (in $h$), the grading parameter should be chosen as
\[
\mu = \frac{\ell - 1}{\ell - 1 -\alpha} >  \frac{1 - 2 \epsilon}{s - 1/2} > 2. 
\]
Therefore, even though the interpolation error (in the $H^1(\Omega)$ norm) is of the order of $h^{\ell-1} = h^{1 -2\epsilon}$, in terms of degrees of freedom we have
\[
| u - I_h u |_{H^1(\Omega)} \lesssim ( {\rm dim} \Vens_h )^{1/2-s} \log ({\rm dim} \Vens_h ) \|f\|_{C^2(\overline\Omega)}.
\]
Having assumed higher regularity from $f$ has lead to no gain: the order is the same as in \eqref{Scott-Zhang_weighted_v2}. Also, it should be noted that, as described in Remark \ref{rmk:mu}, a more severe grading negatively affects the conditioning of the resulting system.
\end{rmk}

%------------------------------------------------------------------------
\subsection{Global inverse inequality}
%------------------------------------------------------------------------

Our next task is to derive an adequate inverse inequality for discrete functions over graded meshes. The non-uniformity of the meshes substantially affects the order (with respect to $h$) of such an estimate. In spite of its pessimistic character, the following proposition is instrumental to derive convergence rates in the $H^1(\Omega)$-norm.

\begin{pro}[Inverse inequality on graded meshes]
Consider a sequence of discrete spaces $(\Vens_h)$ over a sequence of meshes constructed according to \eqref{eq:grading} with a grading parameter $\mu$. Then, it holds
\begin{equation}\label{eq:inverse_graded}
 |v_h|_{H^1(\Omega)} \lesssim h^{\mu(s-1)} \|v_h\|_{\widetilde H^s(\Omega)},\quad \forall h,\ \forall v_h\in\Vens_h.
\end{equation}
\end{pro}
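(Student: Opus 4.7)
The plan is to reduce the problem, via a localization argument, to the classical $L^2 \to H^1$ inverse inequality, and then interpolate against the trivial identity $|v_h|_{H^1(\Omega)} \le |v_h|_{H^1(\Omega)}$, exactly as in the proof of Proposition \ref{pro:inverse_inequality}. The only new ingredient is that on graded meshes the relevant mesh parameter is no longer $h$ itself but the \emph{minimal} element diameter $h_{\min}$.

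First I would observe that by the grading rule \eqref{eq:grading}, every element $T \in \mathcal{T}_h$ satisfies $h_T \gtrsim h^\mu$, since the smallest elements are precisely those touching $\partial\Omega$ and for them $h_T \simeq h^\mu$; all interior elements satisfy $h_T \simeq h \, d(T,\partial\Omega)^{(\mu-1)/\mu} \gtrsim h^\mu$ as well because $d(T,\partial\Omega) \gtrsim h^\mu$. Applying the standard local inverse estimate $|v_h|_{H^1(T)}^2 \lesssim h_T^{-2} \|v_h\|_{L^2(T)}^2$ on each element and summing over $T \in \mathcal{T}_h$ yields
\[
|v_h|_{H^1(\Omega)}^2 \;\lesssim\; \sum_{T \in \mathcal{T}_h} h_T^{-2} \|v_h\|_{L^2(T)}^2 \;\lesssim\; h^{-2\mu}\, \|v_h\|_{L^2(\Omega)}^2,
\]
i.e.\ $|v_h|_{H^1(\Omega)} \lesssim h^{-\mu}\,\|v_h\|_{L^2(\Omega)}$ for all $v_h \in \Vens_h$.

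Second, since $\Vens_h \subset H^1_0(\Omega)$, the identity map on $\Vens_h$ is bounded both as an operator $L^2(\Omega) \to H^1_0(\Omega)$ with norm $\lesssim h^{-\mu}$ and trivially as an operator $H^1_0(\Omega) \to H^1_0(\Omega)$ with norm $\le 1$. Invoking the real interpolation identity $\widetilde H^s(\Omega) = [L^2(\Omega), H_0^1(\Omega)]_s$ recalled in the first remark of Section~\ref{Sobolev}, we conclude, for every $v_h \in \Vens_h$,
\[
|v_h|_{H^1(\Omega)} \;\lesssim\; (h^{-\mu})^{1-s} \cdot 1^{s}\, \|v_h\|_{\widetilde H^s(\Omega)} \;=\; h^{\mu(s-1)}\, \|v_h\|_{\widetilde H^s(\Omega)},
\]
which is exactly \eqref{eq:inverse_graded}.

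I do not anticipate any serious obstacle here; the argument is a verbatim adaptation of the quasi-uniform case (Proposition \ref{pro:inverse_inequality}) with $h$ replaced by $h_{\min} \simeq h^\mu$. The only mildly delicate point is the invocation of operator interpolation for the identity on the discrete space; if one wished to avoid relying on the interpolation identity for $\widetilde H^s(\Omega)$, the same bound can alternatively be obtained by a direct K-functional argument applied to $v_h \in \Vens_h \subset H^1_0(\Omega)$, using only the $L^2 \to H^1$ inverse estimate just established.
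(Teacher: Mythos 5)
Your proposal is correct and follows essentially the same route as the paper: the local inverse estimate combined with the observation that $h_T \gtrsim h^\mu$ for all elements (since $d(T,\partial\Omega)\gtrsim h^\mu$ away from the boundary) gives the global bound $|v_h|_{H^1(\Omega)} \lesssim h^{-\mu}\|v_h\|_{L^2(\Omega)}$, and the conclusion then follows by interpolation exactly as in Proposition \ref{pro:inverse_inequality}. Your write-up merely spells out the operator-interpolation step that the paper leaves implicit.
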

\begin{proof}
As in Proposition \ref{pro:inverse_inequality}, the proof follows by interpolation. In view of \eqref{eq:grading}, the local inverse inequality
\[
| v_h |_{H^1(T)} \lesssim h_T^{-1} \| v_h \|_{L^2(T)}, \quad T \in \mathcal{T}_h,
\]
can be written as
\[
| v_h |_{H^1(T)} \lesssim 
\left\lbrace
\begin{array}{ll}
 h^{-\mu} \| v_h \|_{L^2(T)} &
\mbox{if } S_T\cap \partial \Omega\neq \emptyset, \\
h^{-1} d(T, \pa\Omega)^{- (\mu-1)/\mu} \| v_h \|_{L^2(T)} & \mbox{ if }  S_T\cap \partial \Omega =\emptyset. \\
 \end{array}
 \right.
\]
Since $d(T, \pa\Omega) \gtrsim h^\mu$ for all elements $T$ such that $S_Ts\cap \partial \Omega =\emptyset$, we obtain the global inverse inequality
\[
| v_h |_{H^1(\Omega)} \lesssim h^{-\mu} \| v_h \|_{L^2(\Omega)}.
\]
By interpolation, we conclude \eqref{eq:inverse_graded}.
\end{proof}

%------------------------------------------------------------------------
\subsection{Convergence in $H^1(\Omega)$}
%------------------------------------------------------------------------

We are finally in position to derive a convergence rate for the solution to \eqref{eq:dirichlet} in $H^1(\Omega)$ using graded meshes. For that purpose recall, from Remark \ref{rmk:grading_energy}, that if $f \in C^{1-s}(\overline\Omega)$, then considering the Scott-Zhang interpolation on meshes graded according to \eqref{eq:grading} with $\mu=2$, we have
\begin{equation} \label{eq:Scott-Zhang_weightedv2s}
\| u - I_h u \|_{\widetilde H^s(\Omega)} \lesssim \frac{h^{1 - 2\epsilon}}{\epsilon} \| f \|_{C^{1-s}(\overline\Omega)}.
\end{equation}
This, combined with the best approximation property \eqref{eq:cea}, gives
\begin{equation} \label{eq:approximation_weighted}
\| u - u_h \|_{\widetilde H^s(\Omega)} \lesssim \frac{h^{1 - 2\epsilon}}{\epsilon} \| f \|_{C^{1-s}(\overline\Omega)}
\end{equation}

\begin{pro}[Convergence in $H^1(\Omega)$ on graded meshes] \label{pro:H1_graded}
Assume that $s \in (\frac12,1)$ and $f \in C^{1-s}(\overline\Omega)$. Consider a sequence of discrete spaces $(\Vens_h)$ over meshes graded according to \eqref{eq:grading} with $\mu =2$. Then, for $h$ sufficiently small, it holds
\[ 
\|u-u_h\|_{H^1(\Omega)} \lesssim h^{2(s-1/2)} | \log h | \|f\|_{C^{1-s}(\overline\Omega)}. 
\]
In terms of the dimension of the discrete spaces, the estimate above reads
\[ 
\|u-u_h\|_{H^1(\Omega)}  \lesssim ( {\rm dim} \Vens_h )^{1/2-s} \log ({\rm dim} \Vens_h ) \|f\|_{C^{1-s}(\overline\Omega)}. 
\]
\end{pro}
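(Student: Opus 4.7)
The proof follows the same structure as Proposition \ref{pro:H1_uniform}, replacing each ingredient by its graded-mesh counterpart. First, by the triangle inequality I would split
\[
\|u-u_h\|_{H^1(\Omega)} \le \|u-I_hu\|_{H^1(\Omega)} + \|I_hu-u_h\|_{H^1(\Omega)}.
\]
The first term is directly controlled by the weighted interpolation estimate \eqref{Scott-Zhang_weightedv2} (valid under the assumption $f\in C^{1-s}(\overline\Omega)$, since $\beta = 1-s > 0$), which yields, for any $\epsilon \in (0,(1-s)/2)$,
\[
\|u-I_hu\|_{H^1(\Omega)} \lesssim \frac{h^{2s-1-2\epsilon}}{\epsilon}\|f\|_{C^{1-s}(\overline\Omega)}.
\]

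For the second term, because $I_hu-u_h \in \Vens_h$, I would apply the graded inverse inequality \eqref{eq:inverse_graded} with $\mu=2$, which gives a factor $h^{2(s-1)}=h^{2s-2}$, and then use the triangle inequality in the $\widetilde H^s(\Omega)$-norm:
\[
\|I_hu-u_h\|_{H^1(\Omega)} \lesssim h^{2s-2}\bigl(\|I_hu - u\|_{\widetilde H^s(\Omega)} + \|u-u_h\|_{\widetilde H^s(\Omega)}\bigr).
\]
Each of these two $\widetilde H^s(\Omega)$ terms is bounded by $\epsilon^{-1} h^{1-2\epsilon}\|f\|_{C^{1-s}(\overline\Omega)}$ using \eqref{eq:Scott-Zhang_weightedv2s} and its best-approximation consequence \eqref{eq:approximation_weighted}, both of which are stated precisely for $\mu=2$. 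Multiplying out gives
\[
\|I_hu-u_h\|_{H^1(\Omega)} \lesssim \frac{h^{2s-1-2\epsilon}}{\epsilon}\|f\|_{C^{1-s}(\overline\Omega)},
\]
and adding the two contributions produces $\|u-u_h\|_{H^1(\Omega)} \lesssim \epsilon^{-1} h^{2s-1-2\epsilon}\|f\|_{C^{1-s}(\overline\Omega)}$ for every admissible $\epsilon$. Selecting $\epsilon = |\log h|^{-1}$ absorbs $h^{-2\epsilon}$ into a constant and converts $\epsilon^{-1}$ into $|\log h|$, yielding the first stated estimate.

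Finally, to express the rate in terms of $\mathrm{dim}\Vens_h$, I would invoke the complexity bound \eqref{eq:dofs} at $\mu=2$, namely $\mathrm{dim}\Vens_h \simeq h^{-2}|\log h|$, invert it to get $h^{2s-1} \simeq (\mathrm{dim}\Vens_h)^{1/2-s}$ modulo a logarithmic correction, and absorb both $|\log h|$ factors into a single $\log(\mathrm{dim}\Vens_h)$. The main conceptual subtlety is not technical but accounting-based: one must check that the loss $h^{2s-2}$ from the graded inverse inequality \eqref{eq:inverse_graded} is exactly compensated by the enhanced order $h^{1-2\epsilon}$ of the weighted-mesh energy estimate (which required the grading $\mu=2$ in the first place), so that the final rate is dictated purely by the weighted interpolation estimate \eqref{Scott-Zhang_weightedv2}; this balance is what makes $\mu=2$ simultaneously optimal for the energy and the $H^1$ norms.
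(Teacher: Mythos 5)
Your proposal is correct and follows essentially the same route as the paper's own proof: triangle inequality, the weighted interpolation estimate \eqref{Scott-Zhang_weighted_v2} for $\|u-I_hu\|_{H^1(\Omega)}$, the graded inverse inequality \eqref{eq:inverse_graded} combined with \eqref{eq:Scott-Zhang_weightedv2s} and \eqref{eq:approximation_weighted} for $\|I_hu-u_h\|_{H^1(\Omega)}$, the choice $\epsilon = |\log h|^{-1}$, and \eqref{eq:dofs} for the complexity statement. Your restriction $\epsilon\in(0,(1-s)/2)$ is in fact slightly more careful than the paper's stated range $\epsilon\in(0,1/2)$, and is harmless since $\epsilon=|\log h|^{-1}$ is eventually admissible.
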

\begin{proof}
The proof follows the steps from Proposition \ref{pro:H1_uniform}, but replacing (\ref{Scott-Zhang}-bottom) by \eqref{Scott-Zhang_weighted_v2}.
For $\epsilon\in(0,1/2)$, using the triangle inequality and the interpolation estimate \eqref{Scott-Zhang_weighted_v2}, we obtain
\[
\begin{aligned}
\|u-u_h\|_{H^1(\Omega)} &\le \|u-I_hu\|_{H^1(\Omega)} + \|I_hu-u_h\|_{H^1(\Omega)}\\
&\lesssim \frac{h^{2(s-1/2-\epsilon)}}{\epsilon} \|f\|_{C^{1-s}(\overline \Omega)} + \|I_hu-u_h\|_{H^1(\Omega)} .
\end{aligned}
\]
Therefore, we need to bound $\|I_hu-u_h\|_{H^1(\Omega)}$. By the inverse inequality \eqref{eq:inverse_graded} and using again the triangle inequality, it follows 
\[
\|I_hu-u_h\|_{H^1(\Omega)} \lesssim h^{2(s-1)}\left( \|I_hu-u\|_{\widetilde H^s(\Omega)}+\|u-u_h\|_{\widetilde H^s(\Omega)} \right) .
\]
Finally, we use \eqref{eq:Scott-Zhang_weightedv2s} and \eqref{eq:approximation_weighted} to bound the right hand side above and deduce that 
\[
\|u-u_h\|_{H^1(\Omega)} \lesssim \frac{h^{2(s-1/2-\epsilon)}}{\epsilon} \|f\|_{C^{1-s}(\overline \Omega)}, \quad \forall \epsilon \in (0, 1/2).
\]
Setting $\epsilon = | \log h |^{-1}$ in this inequality, we conclude the proof of the first statement. The second part of the proposition follows by identity \eqref{eq:dofs}.
\end{proof}

\begin{rmk}[Error estimates in 1d using graded meshes] \label{rmk:1d_graded} 
As we pointed out in Remark~\ref{rmk:1d_3d}, for one-dimensional problems, it is possible to arbitrarily increase the grading parameter $\mu$ without affecting the relation $\rm{dim}\Vens_h \simeq h^{-1}$. When considering error estimates in the energy norm, this allows to obtain convergence with order $2-s$ by taking $\mu = 1/(s-1/2) > 2$.

On the other hand, it is clear that a large $\mu$ affects the inverse inequality \eqref{eq:inverse_graded}, and limits the theoretical order of convergence in the $H^1(\Omega)$ norm. Indeed, a direct calculation shows that the optimal error estimate that can be obtained as in Proposition \ref{pro:H1_graded} is given by taking $\mu = 2(2-s)$:
\begin{equation} \label{eq:1d_estimate}
\|u-u_h\|_{H^1(\Omega)} \lesssim h^{2(s-1/2)(2-s)} | \log h | \|f\|_{C^{2-2s}(\overline\Omega)}. 
\end{equation}
In \S\ref{sub:1d} we perform experiments that illustrate the sharpness of this estimate.
\end{rmk}

%------------------------------------------------------------------------
%------------------------------------------------------------------------
\section{Numerical experiments} \label{num_exp}
%------------------------------------------------------------------------
%------------------------------------------------------------------------

In this section, we display some results for problems in one- and two-dimensional domains, both for uniform and graded meshes. The outcomes of our numerical experiments matches the prediction that the convergence rates deteriorate as $s \to 1/2$. For completeness, we include the negative results for the limit case $s=1/2$ in order to further illustrate the fact that the solution to \eqref{eq:dirichlet} may not belong to $H^1(\Omega)$ (see Remark~\ref{rmk_sharpness}).

Unless $\Omega$ is a ball, it is not possible to derive closed expressions for the solution $u$ to \eqref{eq:dirichlet}; thus, we restrict the numerical examples to such domains.  Specifically, consider the Jacobi polynomials $P_k^{(\alpha, \beta)} \colon [-1,1] \to \R,$ given by 
{\small{
\[
P_k^{(\alpha, \beta)}(z) =  \frac{\Gamma (\alpha+k+1)}{k!\,\Gamma (\alpha+\beta+k+1)} 
\sum_{m=0}^k {k\choose m} \frac{\Gamma (\alpha + \beta + k + m + 1)}{\Gamma (\alpha + m + 1)} 
\left(\frac{z-1}{2}\right)^m,
\]
}}
 and the weight function  $\omega^s:\R^n\to \R,$
\[ \omega^s(x) = (1 - |x|^2)_+ ^s. \]
Then, given $k \in \mathbb{N}$, $s \in (0,1)$, and the right-hand side
\begin{equation} \label{eq:right_hand_side}
f (x) =   P_k^{(s, \, n/2-1)} ( 2 |x|^2 - 1),
\end{equation}
the solution to \eqref{eq:dirichlet} in the unit ball $B(0,1)\subset \R^n$ is  \cite[Theorem 3]{DyKuKw17}
\begin{equation} \label{eq:solution}
u (x) = \frac{k! \, \Gamma\left(\frac n2+k\right)}{2^{2s} \, \Gamma(1+s+k) \Gamma\left(\frac n2+s+k\right)} \, \omega^s(x) \, P_k^{(s, \, n/2-1)} ( 2 |x|^2 - 1).
\end{equation}

%------------------------------------------------------------------------
\subsection{One-dimensional problems with constant right hand side} \label{sub:1d}
%------------------------------------------------------------------------

As a first example, we take $\Omega = (-1,1)$ and $f = 1$. Then, according to \eqref{eq:solution}, for $s\in(0,1)$, the solution to \eqref{eq:dirichlet}  is given by
\[
u(x) = \frac{\sqrt{\pi}}{2^{2s}\Gamma(1+s)\Gamma(1/2+s)} (1 - x^2)^s_+ .
\]
We compute finite element solutions on meshes with~$N \in \{1000, 2000, \ldots , 10000 \}$ equally spaced nodes and the corresponding errors in the $H^1(\Omega)$ norm for $s \in \{ 0.6, 0.7, 0.8, 0.9\}$.
We display our results in Figure \ref{fig:errs_interval}. These are in good agreement with the estimates from Proposition \ref{pro:H1_uniform}.

\begin{figure}[ht]
	\centering
	\includegraphics[width=1\textwidth]{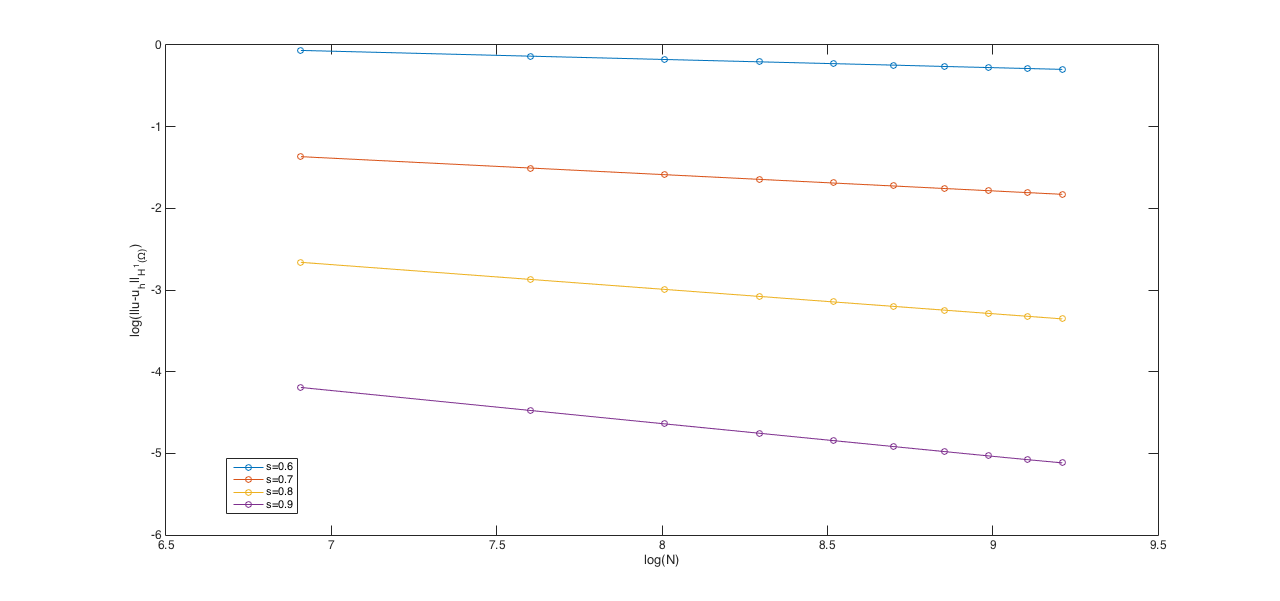}
	\caption{Errors for the first example described in \S\ref{sub:1d}. Least squares fitting of the data yields estimated orders of convergence $0.101$ for $s=0.6$, $0.200$ for $s=0.7$,  $0.301$ for $s=0.8$, and $0.402$ for $s=0.9$.}
	\label{fig:errs_interval}
\end{figure}

Moreover, we run the same experiment for $s=0.5$. Naturally, in this case the solution $u$ does not belong to $H^1(\Omega)$. Therefore, we just compute the $H^1(\Omega)$ seminorm of the discrete solutions; Figure \ref{fig:interval_05} gives evidence that these are indeed unbounded. 

\begin{figure}[ht]
	\centering
	\includegraphics[width=1\textwidth]{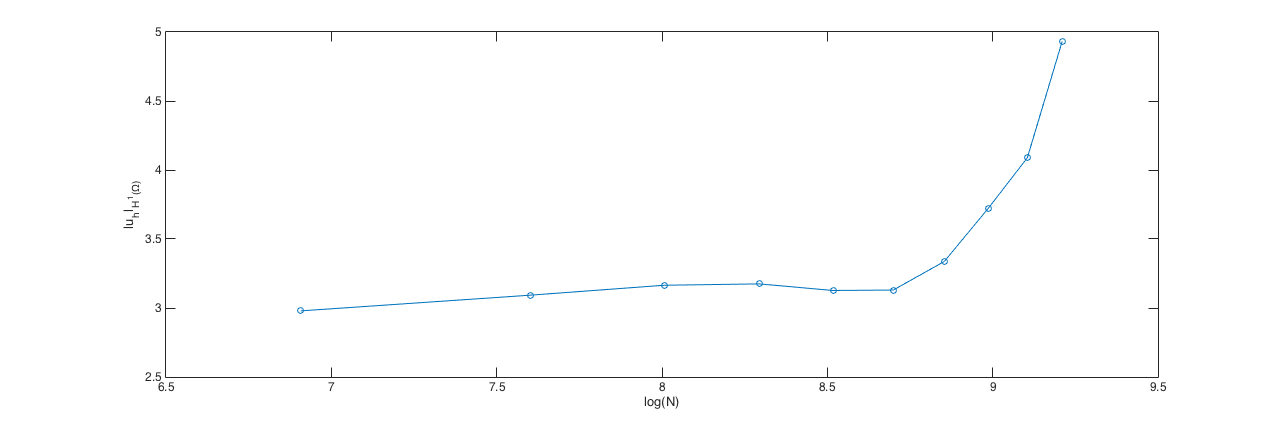}
	\caption{$H^1(\Omega)$ seminorm of the finite element solutions for $s=0.5$ as a function of the number of degrees of freedom.}
	\label{fig:interval_05}
\end{figure}

As a second example in one dimension, we build graded meshes using either $\mu_1 = 2 (2-s)$ or $\mu_2 = \frac{1}{s-1/2}$. As described in Remark \ref{rmk:1d_graded}, convergence with order $2(s-1/2)(2-s)$ can be obtained grading meshes according to $\mu_1$. As for $\mu_2$, although we cannot apply the argument from Proposition \ref{pro:H1_graded}, Table \ref{tab:1d_graded} shows that experimentally we recover linear convergence rates in the $H^1(\Omega)$ norm. We point out that, especially for $\mu_2$ with $s$ near $1/2$, the large magnitude of the required grading yields very small elements near the boundary, and therefore limits the number of nodes that the meshes can have before reaching machine precision. In these sets of experiments, for every $s$ we considered four meshes with the number of nodes that guaranteed that the smallest elements were closest to being of size $\{ 10^{-6} , \ldots, 10^{-9} \}$.

\begin{table}
\label{tab:1d_graded}  
\begin{tabular}{ccccc}
\hline\noalign{\smallskip}
$s$ & $\mu_1 = 2 (2-s)$ & Computed order ($\mu_1$) & $\mu_2 = \frac{1}{s-1/2}$ & Computed order ($\mu_2$) \\
\noalign{\smallskip}\hline\noalign{\smallskip}
$0.6$ &  $2.8$ & $0.29 \quad (0.28)$  &  $10$ & $1.00$ \\
$0.7$ &  $2.6$ & $0.53 \quad (0.55)$ &  $5$ & $0.95$ \\
$0.8$ &  $2.4$ & $0.74 \quad (0.72)$ &  $10/3$ & $0.97$ \\
$0.9$ &  $2.2$ & $0.93 \quad (0.88)$ &  $2.5$ & $0.99$ \\
\noalign{\smallskip}\hline \\ 
\end{tabular}
\caption{Observed convergence rates in the $H^1(\Omega)$ norm for the one-dimensional homogeneous Dirichlet problem using graded meshes. In the column with the computed order using $\mu_1$, the predicted order $2(s-1/2)(2-s)$ is in parenthesis.}
\end{table}

%------------------------------------------------------------------------
\subsection{Two-dimensional problems} \label{sub:2d}
%------------------------------------------------------------------------

We now turn our attention to problems posed in the two-dimensional unit ball $\Omega = B(0,1) \subset \mathbb{R}^2$. In first place, we set $k=0$ in \eqref{eq:right_hand_side} and consider problems with $s \in \{0.6,0.7,0.8,0.9\}$. With the aid of the code from \cite{AcBeBo17}, we compute solutions using both uniform and graded meshes, with $\mu = 2$. Table \ref{tab:2d_f_constant} summarizes our findings, which are in accordance with the theory: in all cases and with respect to $\rm{dim}\Vens_h$, the observed order of convergence employing uniform meshes is about $\frac{s-1/2}{2}$ (cf. Corollary~\ref{cor:complexity}), while this order is doubled when taking graded meshes (cf. second part of Proposition \ref{pro:H1_graded}).

\begin{table}[h]
\label{tab:2d_f_constant}
\begin{tabular}{ccc}
\hline\noalign{\smallskip}
$s$ & Computed order (uniform) & Computed order (graded) \\
\noalign{\smallskip}\hline\noalign{\smallskip}
$0.6$ &  $0.04 \quad (0.05)$ & $0.08 \quad (0.10)$ \\
$0.7$ &  $0.08 \quad (0.10)$ & $0.18 \quad (0.20)$ \\
$0.8$ &  $0.13 \quad (0.15)$ & $0.30 \quad (0.30)$ \\
$0.9$ &  $0.19 \quad (0.20)$ & $0.41 \quad (0.40)$ \\
\noalign{\smallskip}\hline \\ 
\end{tabular}
\caption{Observed convergence rates in the $H^1(\Omega)$ norm for the two-dimensional homogeneous Dirichlet problem with constant right-hand side. The orders predicted by either Corollary~\ref{cor:complexity} and Proposition \ref{pro:H1_graded} are in parenthesis.}
\end{table}

Figure \ref{fig:gradients_2d} exhibits the logarithm of the norm of the broken gradient of discrete solutions for $s=0.6$ over certain uniform and graded ($\mu=2$) meshes with about the same number of degrees of freedom. We point out that, in this example, the exact solution verifies $|\nabla u(x)| \sim (1-|x|^2)^{s-1}$ for $|x|\sim 1$. The better capability of the graded mesh to capture the singularity of the gradient at the boundary of the domain is apparent.

\begin{figure}[ht]
	\centering
	\includegraphics[width=0.9\textwidth]{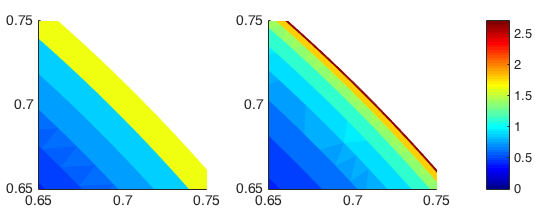}
	\caption{Logarithm of the norm of the gradient of the discrete solutions to the first example in \S\ref{sub:2d} for $s=0.6$ using uniform (left) and graded (right) meshes with approximately the same number of degrees of freedom (12636 and 12656, respectively). The pictures correspond to a zoom on the square $[0.65, 0.75]^2$.}
	\label{fig:gradients_2d}
\end{figure}

As a final illustration, we consider a problem with non-constant right-hand side. Setting $k=1$ in \eqref{eq:right_hand_side}, we obtain that
\[
u (x) =  \frac{1}{2^{2s} (\Gamma(2+s))^2} \, \left(1-|x|^2\right)^s_+ \, \left((2+s)|x|^2 - 1 \right)  
\]
solves \eqref{eq:dirichlet} in $B(0,1) \subset \R^2$ for
\[
f(x) = (2+s) |x|^2 - 1 .
\]
We compute solutions over meshes graded according to $\mu = 2$, and summarize our findings in Figure \ref{fig:errs_ball}. These are in good agreement with the orders $s-1/2$, with respect to ${\rm dim}\Vens_h$, predicted by Proposition \ref{pro:H1_graded}.

\begin{figure}[ht]
	\centering
	\includegraphics[width=1\textwidth]{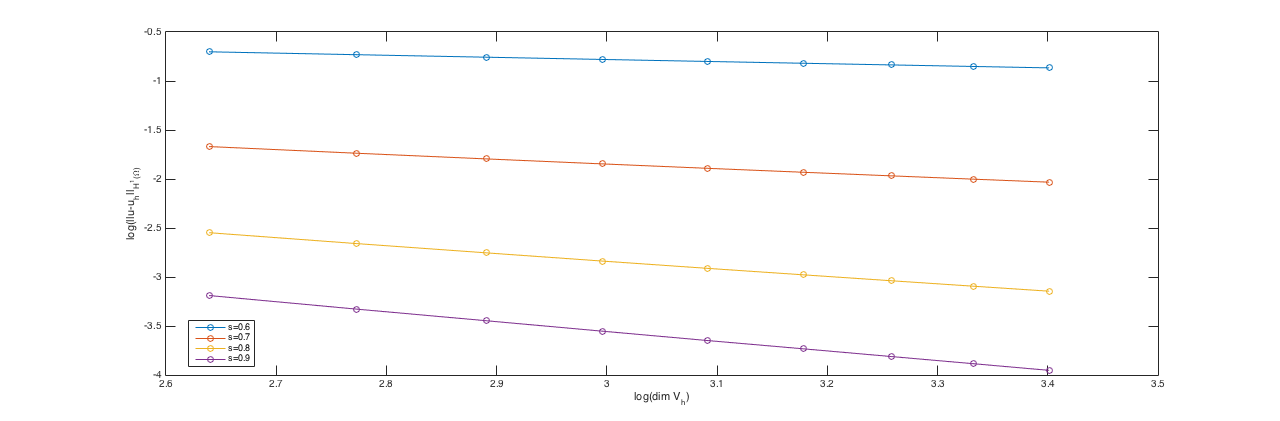}
	\caption{Errors for the example with non-constant right hand side in the unit ball in $\R^2$ with respect to ${\rm dim} \Vens_h$. Least squares fitting of the data yields estimated orders of convergence $0.09$ for $s=0.6$, $0.20$ for $s=0.7$,  $0.33$ for $s=0.8$, and $0.42$ for $s=0.9$.}
	\label{fig:errs_ball}
\end{figure}

\section{Concluding remarks} \label{sec:conclusion}
In this paper, we analyzed convergence rates for finite element discretizations of the integral fractional Laplacian over bounded domains. We showed that the a priori convergence rates can be improved by resorting to graded meshes.

For the sake of clarity, we restricted the discussion to the $H^1(\Omega)$-norm; nevertheless, the arguments presented here can be applied to obtain convergence rates in $H^t(\Omega)$ for all $t \in (s,s+1/2)$. For instance, the claim in Proposition \ref{pro:H1_uniform} can be extended to
\[
\| u - u_h \|_{H^t(\Omega)} \lesssim h^{s+1/2-t} | \log h | \| f \|_{C^\beta(\overline\Omega)}, \quad t \in (s,s+1/2).
\]
Analogous estimates can be obtained for discretizations on graded meshes. In such a case, the optimal grading depends on the regularity of the data and the norm in which the error is measured.

The class of graded meshes we considered allow to deliver optimal convergence rates in one-dimensional domains. However, in two and three dimensions, in spite of accelerating the convergence of the finite element approximations, such meshes are not capable of delivering optimal convergence rates. Shape-regularity limits the grading parameter that can be taken while keeping control of the number of degrees of freedom. Therefore, discretizations using anisotropic elements are required. To the best of the authors' knowledge, there is no interpolation theory using anisotropic fractional-order Sobolev spaces in the literature.

%------------------%
%   Bibliography   %
%------------------%
\bibliographystyle{plain}
\bibliography{BoCi17_Nonlocal}

\begin{thebibliography}{10}

\bibitem{AcBeBo17}
G.~Acosta, F.M. Bersetche, and J.P. Borthagaray.
\newblock A short {FE} implementation for a 2d homogeneous {D}irichlet problem
  of a fractional {L}aplacian.
\newblock {\em Comput. Math. Appl.}, 74(4):784--816, 2017.

\bibitem{AcBo17}
G.~Acosta and J.P. Borthagaray.
\newblock {A fractional Laplace equation: regularity of solutions and finite
  element approximations}.
\newblock {\em SIAM J. Numer. Anal.}, 55(2):472--495, 2017.

\bibitem{AcBoHe18}
G.~Acosta, J.P. Borthagaray, and N.~Heuer.
\newblock Finite element approximations for the nonhomogeneous fractional
  {D}irichlet problem.
\newblock {\em IMA J. Numer. Anal.}, 2018.

\bibitem{AiGl17}
M.~Ainsworth and C.~Glusa.
\newblock Aspects of an adaptive finite element method for the fractional
  {L}aplacian: a priori and a posteriori error estimates, efficient
  implementation and multigrid solver.
\newblock {\em Comput. Methods Appl. Mech. Engrg.}, 327:4--35, 2017.

\bibitem{AiMcTr99}
M.~Ainsworth, W.~McLean, and T.~Tran.
\newblock The conditioning of boundary element equations on locally refined
  meshes and preconditioning by diagonal scaling.
\newblock {\em SIAM J. Numer. Anal.}, 36(6):1901--1932, 1999.

\bibitem{Apel99}
T.~Apel.
\newblock {\em Anisotropic finite elements: local estimates and applications}.
\newblock B.~G.~Teubner, 1999.

\bibitem{BeLo76}
J.~Bergh and J.~L\"ofstr\"om.
\newblock {\em Interpolation spaces: an introduction}.
\newblock Springer-Verlag, Berlin, 1976.

\bibitem{Be96}
J.~Bertoin.
\newblock {\em L\'evy processes}, volume 121 of {\em Cambridge Tracts in
  Mathematics}.
\newblock Cambridge University Press, Cambridge, 1996.

\bibitem{BBNOS18}
A.~Bonito, J.P. Borthagaray, R.H. Nochetto, E.~Ot\'arola, and A.J. Salgado.
\newblock Numerical methods for fractional diffusion.
\newblock {\em Comput. Vis. Sci.}, Mar 2018.

\bibitem{BoLePa17}
A.~Bonito, W.~Lei, and J.E. Pasciak.
\newblock Numerical approximation of the integral fractional {L}aplacian.
\newblock {\em arXiv:1707.04290}, 2017.

\bibitem{Bort17}
J.P. Borthagaray.
\newblock {\em {Laplaciano fraccionario: regularidad de soluciones y
  aproximaciones por elementos finitos}}.
\newblock PhD thesis, Universidad de Buenos Aires, Buenos Aires, Argentina,
  2017.

\bibitem{BoNoSa18}
J.P. Borthagaray, R.H. Nochetto, and A.J. Salgado.
\newblock {Sobolev regularity and rate of approximation of the obstacle problem
  for the integral fractional Laplacian}.
\newblock {\em arXiv:1806.08048}, 2018.

\bibitem{BoDM16}
J.P. Borthagaray, L.M.~Del Pezzo, and S.~Mart{\'\i}nez.
\newblock {Finite element approximation for the fractional eigenvalue problem}.
\newblock {\em J. Sci. Comput.}, 2018.

\bibitem{BoBrMi01}
J.~Bourgain, H.~Brezis, and P.~Mironescu.
\newblock Another look at {S}obolev spaces.
\newblock In {\em Optimal Control and Partial Differential Equations}, pages
  439--455, 2001.

\bibitem{BuVa16}
C.~Bucur and E.~Valdinoci.
\newblock {\em Nonlocal diffusion and applications}, volume~20 of {\em Lecture
  Notes of the Unione Matematica Italiana}.
\newblock Springer; Unione Matematica Italiana, Bologna, 2016.

\bibitem{ChHeMo15}
S.N. Chandler-Wilde, D.P. Hewett, and A.~Moiola.
\newblock Interpolation of {H}ilbert and {S}obolev spaces: quantitative
  estimates and counterexamples.
\newblock {\em Mathematika}, 61(2):414--443, 2015.

\bibitem{Ci13}
P.~Ciarlet, Jr.
\newblock {Analysis of the Scott-Zhang interpolation in the fractional order
  Sobolev spaces}.
\newblock {\em J. Numer. Math.}, 21(3):173--180, 2013.

\bibitem{Co17}
M.~Cozzi.
\newblock Interior regularity of solutions of non-local equations in {S}obolev
  and {N}ikol'skii spaces.
\newblock {\em Ann. Mat. Pura Appl. (4)}, 196(2):555--578, 2017.

\bibitem{DEGu13}
M.~D'Elia and M.~Gunzburger.
\newblock The fractional {L}aplacian operator on bounded domains as a special
  case of the nonlocal diffusion operator.
\newblock {\em Comput. Math. Appl.}, 66(7):1245 -- 1260, 2013.

\bibitem{DNPaVa12}
E.~Di~Nezza, G.~Palatucci, and E.~Valdinoci.
\newblock Hitchhiker's guide to the fractional {S}obolev spaces.
\newblock {\em Bull. Sci. Math.}, 136(5):521--573, 2012.

\bibitem{DuWaZh17}
S.~Duo, H.~Wang, and Y.~Zhang.
\newblock A comparative study on nonlocal diffusion operators related to the
  fractional {L}aplacian.
\newblock {\em Discrete Contin. Dyn. Syst. Ser. B}, 2018.

\bibitem{DyKuKw17}
B.~Dyda, A.~Kuznetsov, and M.~Kwa{\'{s}}nicki.
\newblock Eigenvalues of the fractional {L}aplace operator in the unit ball.
\newblock {\em J. Lond. Math. Soc.}, 95(2):500--518, 2017.

\bibitem{ErGu04}
A.~Ern and J.-L. {Guermond}.
\newblock {\em Theory and practice of finite elements}.
\newblock Springer-Verlag, 2004.

\bibitem{Fa02}
B.~Faermann.
\newblock Localization of the {A}ronszajn-{S}lobodeckij norm and application to
  adaptive boundary element methods. {II}. {T}he three-dimensional case.
\newblock {\em Numer. Math.}, 92(3):467--499, 2002.

\bibitem{Gr15}
G.~Grubb.
\newblock Fractional {L}aplacians on domains, a development of {H}\"ormander's
  theory of $\mu$-transmission pseudodifferential operators.
\newblock {\em Adv. Math.}, 268:478--528, 2015.

\bibitem{Kw17}
M.~Kwa{\'{s}}nicki.
\newblock Ten equivalent definitions of the fractional {L}aplace operator.
\newblock {\em Fract. Calc. Appl. Anal.}, 20(1):7--51, 2017.

\bibitem{La72}
N.S. Landkof.
\newblock {\em Foundations of modern potential theory}.
\newblock Springer-Verlag, New York-Heidelberg, 1972.
\newblock Translated from the Russian by A. P. Doohovskoy, Die Grundlehren der
  mathematischen Wissenschaften, Band 180.

\bibitem{Letal18}
A.~Lischke, G.~Pang, M.~Gulian, F.~Song, C.~Glusa, X.~Zheng, Z.~Mao, W.~Cai,
  M.M. Meerschaert, M.~Ainsworth, and G.E. Karniadakis.
\newblock What is the {F}ractional {L}aplacian?
\newblock {\em arXiv:1801.09767}, 2018.

\bibitem{MaSh02}
V.~Maz'ya and T.~Shaposhnikova.
\newblock On the {B}ourgain, {B}rezis, and {M}ironescu theorem concerning
  limiting embeddings of fractional {S}obolev spaces.
\newblock {\em J. Funct. Anal.}, 195(2):230 -- 238, 2002.

\bibitem{MeKl00}
R.~Metzler and J.~Klafter.
\newblock The random walk's guide to anomalous diffusion: a fractional dynamics
  approach.
\newblock {\em Phys. Rep.}, 339(1):1--77, 2000.

\bibitem{ROSe14}
X.~Ros-Oton and J.~Serra.
\newblock The {D}irichlet problem for the fractional {L}aplacian: regularity up
  to the boundary.
\newblock {\em J. Math. Pures Appl.}, 101(3):275 -- 302, 2014.

\bibitem{ScZh90}
L.~R. Scott and S.~Zhang.
\newblock Finite element interpolation of nonsmooth functions satisfying
  boundary conditions.
\newblock {\em Math. Comp.}, 54(190):483--493, 1990.

\end{thebibliography}

\end{document}